\documentclass[12pt]{amsart}
\usepackage[margin=30mm]{geometry}

\usepackage{ucs}
\usepackage[latin1,utf8]{inputenc}

\newtheorem{theorem}{Theorem}[section]
\newtheorem{lemma}[theorem]{Lemma}
\newtheorem{proposition}[theorem]{Proposition}
\newtheorem{cor}[theorem]{Corollary}
\newtheorem{rem}[theorem]{Remark}
\newtheorem{remark}[theorem]{Remark}



\numberwithin{equation}{section}


\newcommand{\ze}{\mathbb{Z}}
\newcommand{\re}{\mathbb{R}}

\newcommand{\nx}{\textbf{x}}

\newcommand{\ns}{\textbf{s}}

\newcommand{\ny}{\textbf{y}}
\newcommand{\nS}{\textbf{S}}
\newcommand{\nT}{\textbf{T}}
\newcommand{\nL}{\textbf{L}}
\newcommand{\nBr}{\textbf{Br}}

\newcommand{\nm}{\textbf{m}}
\newcommand{\na}{\textbf{a}}

\newcommand{\oZ}{\overline{Z}}


\usepackage{hyperref}
\hypersetup
{
    pdfauthor={Gregorio Moreno Flores},
    pdfsubject={Asymmetric directed polymers in a random environment},
    pdftitle={Asymmetric directed polymers in a random environment},
    pdfkeywords={Directed polymers in random environment, Directed polymers in Brownian environments,
Queues in tandem, Random matrices}
}

\title[Asymmetric directed polymers]{Asymmetric directed polymers in random environments}

\author{Gregorio R. MORENO FLORES}
\address{\noindent Department of Mathematics \\ University of Wisconsin-Madison \\ Van Vleck Hall, 480 Linden Dr. \\
Madison, Wisconsin 53706\\ USA
}

\email{gregorio.random@gmail.com}

\thanks{Partially supported by Beca Conicyt-Ambassade de France and CNRS, UMR $7599$ "Probabilit\'es et
Mod\`eles Al\'eatoires".}

\date{}
\keywords{Directed polymers in random environment, Directed polymers in Brownian environments,
Queues in tandem, Random matrices, Last Passage Percolation}

\begin{document}

\begin{abstract}
The model of Brownian Percolation has been introduced as an approximation of discrete last-passage
percolation models close to the axis. It allowed to compute some explicit limits and prove fluctuation
theorems for these, based on the relations between the Brownian percolation and random matrices. 

Here, we present two approaches that allow to treat discrete asymmetric models of directed polymers. In both cases, the behaviour is universal, meaning that the
results do not depend on the precise law of the environment as long as it satisfies some natural moment assumptions.

First, we establish an approximation of asymmetric discrete directed polymers in random environments at very high temperature
by a continuous-time directed polymers model in a Brownian environment, much in the same way than the last passage percolation case.
The key ingredient is a strong embedding argument developed by K\'omlos, Major and T\'usnady.

Then, we study the partition function of a $1+1$-dimensional directed polymer in a random environment with a drift tending to infinity. We 
give an  explicit expression for the free energy based on known asymptotics for last-passage percolation and compute the order of the fluctuations of 
the partition function. We conjecture that the law of the properly rescaled fluctuations converges to the GUE Tracy-Widom distribution.
\end{abstract}

\maketitle

\tableofcontents

\section{Introduction}

The Brownian Percolation model was introduced by Glynn and Whitt in \cite{GW}, where the authors studied the asymptotic 
of passage times for customers in an infinite network of M/M/1 queues in tandem. This continuous model was easier to handle
than the original discrete problem, mostly because of the scaling properties of the Brownian motion.

Let state the problem more precisely in its original setting:
Let $\Omega_{N, M}$ be the set of directed paths from $(0,0)$ to $(N,M)$, i.e., the paths with steps equal to $(0,1)$ or
$(1,0)$. 
Let $\{ \eta(x): \, x \in \ze^2\}$ be a collection of (centered) i.i.d. random variables with
finite exponential moments $e^{\lambda(\beta)}= Q(e^{\beta \eta}) < +\infty$, which
will be referred as the environment variables, or just as the  environment.

Define 

\begin{eqnarray}\label{Mdiscrete}
 T(N,M) = \max_{\ns \in \Omega_{N,M}} H(\nS).
\end{eqnarray}

\noindent where $H({\bf S})= \sum_{(t,x)\in \nS} \eta(t,x)$ will be called the energy of the path $\nS$. This is usually referred to as a 
last-passage percolation problem (LPP). It can be interpreted as the departure time of the $M$-th customer from the $N$-th queue in a series of 
queues in tandem. The
variable $\eta(k,n)$ has then to be understood as the service time of the $k$-th customer in the $n$-th queue.

A regime of special interest occurs when 

$$M=O(N^a)$$ 

\noindent for some $a\in (0,1)$. Glynn and Whitt \cite{GW} proved that

\begin{eqnarray}\label{GW-limit}
 \lim_{N\to +\infty} \frac{T(N,\lfloor xN^a \rfloor)}{N^{(1+a)/2}} = c\sqrt{x},
\end{eqnarray}

\noindent where the constant is independent of $a$ and of the distribution of the service times, given that they satisfy some
mild integrability conditions. The proof used
a strong approximation of sums of i.i.d. random variables by Brownian motions (see \cite{KMT1, KMT2}) in order to approximate
$T(N,\lfloor xN^a \rfloor)$ by the corresponding maximal energy along continuous-time paths in a Brownian environment 
(see below for precise definitions). Then, scaling arguments lead to $(\ref{GW-limit})$.
Based on simulations, they conjectured that $c=2$. 

\noindent The proof of this conjecture was first given by Sepp\"al\"ainen in \cite{Sepp}. It uses a coupling
between queues in tandem and TASEP. Later proofs used an interesting relation between the Brownian model
and the eigenvalues of random matrices. For a shorter proof using ideas from queueing theory and Gaussian concentration, 
see \cite{HMO}. 
A complete review of the ideas of these proofs can be found in \cite{Mthesis}.

Let us now state the following as a summary of the previous discussion:

\begin{theorem}\label{2LPP}
 \begin{eqnarray}\label{GW-limit-2}
 \lim_{N\to +\infty} \frac{T(N,\lfloor xN^a \rfloor)}{N^{(1+a)/2}} = 2\sqrt{x},
\end{eqnarray}

\noindent in probability.
\end{theorem}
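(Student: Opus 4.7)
The plan is to reduce the discrete problem to a Brownian last-passage percolation problem via a strong embedding argument (as in Glynn--Whitt), and then exploit the connection between Brownian LPP and the eigenvalues of the GUE to identify the constant as $c=2$.

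\textbf{Step 1 (Brownian approximation).} For each level $i \in \{1, \ldots, M\}$, I apply the Komlos--Major--Tusnady strong embedding \cite{KMT1, KMT2} to the i.i.d.\ sequence $(\eta(t,i))_{t \geq 1}$, coupling the partial sums $S_k(i) = \sum_{t=1}^{k}\eta(t,i)$ to a standard Brownian motion $B_i$ so that
\[
\max_{k\leq N} |S_k(i) - B_i(k)| = O(\log N)
\]
with overwhelming probability. The $M$ embeddings are realized on independent copies so that $B_1,\dots,B_M$ are independent Brownian motions coupled to the environment.

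\textbf{Step 2 (Transfer to Brownian LPP).} Any directed path from $(0,0)$ to $(N,M)$ is parametrized by the integer times $0 = t_0 \leq t_1 \leq \cdots \leq t_M = N$ at which it jumps from level $i-1$ to level $i$, and its energy rewrites as $\sum_{i=1}^{M}\bigl(S_{t_i}(i) - S_{t_{i-1}}(i)\bigr)$. Applying the KMT bound level by level and maximizing over paths yields $|T(N,M) - L(N,M)| = O(M\log N)$, where
\[
L(N,M) := \sup_{0\leq t_1 \leq \cdots \leq t_{M-1} \leq N}\; \sum_{i=1}^M \bigl(B_i(t_i) - B_i(t_{i-1})\bigr)
\]
is the Brownian LPP value (the gap between the sup over integer and real breakpoints is itself only $O(M\sqrt{\log N})$ by local Brownian oscillation). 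With $M = \lfloor xN^a\rfloor$, this total error is $O(N^a \log N) = o(N^{(1+a)/2})$, so the limit of $T(N,M)/N^{(1+a)/2}$ coincides with that of $L(N,M)/N^{(1+a)/2}$.

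\textbf{Step 3 (Scaling and random matrices).} Brownian scaling gives $L(N,M) \stackrel{d}{=} \sqrt{N}\, L(1,M)$. By the Baryshnikov--O'Connell identity, $L(1,M)$ has the same law as the largest eigenvalue $\lambda_{\max}$ of a standard $M\times M$ GUE matrix, and the Wigner semicircle limit yields $\lambda_{\max}/\sqrt{M} \to 2$ in probability. Therefore $L(N,M)/\sqrt{NM} \to 2$, and with $M = \lfloor xN^a\rfloor$ this becomes
\[
\frac{L(N,\lfloor xN^a\rfloor)}{N^{(1+a)/2}} \longrightarrow 2\sqrt{x}
\]
in probability. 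Combined with Step~2 this proves the theorem.

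\textbf{Main obstacle.} The delicate point is Step~2: one must ensure that the KMT errors, of order $\log N$ per level, do not accumulate in an uncontrolled way when the maximum is taken over exponentially many discrete paths. What rescues the argument is the comfortable gap between the total error budget $O(M\log N) = O(N^a \log N)$ and the limiting scale $N^{(1+a)/2}$ valid for any $a \in (0,1)$, but this requires the uniform KMT-type estimate $\max_{k\leq N}|S_k(i)-B_i(k)| = O(\log N)$ along each level, rather than only a pointwise bound.
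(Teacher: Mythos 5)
The paper does not actually prove Theorem~\ref{2LPP}: it is stated ``as a summary of the previous discussion'', crediting Glynn--Whitt \cite{GW} (KMT embedding, undetermined constant $c$) and Sepp\"al\"ainen \cite{Sepp} (TASEP coupling) or later random-matrix arguments for the value $c=2$. Your proposal reconstructs precisely the KMT $+$ random-matrix route that the paper alludes to (``later proofs used an interesting relation between the Brownian model and the eigenvalues of random matrices''), and it is the same machinery the paper itself deploys in detail at positive temperature in the proof of Theorem~\ref{MOregime-d} (Steps~1--3 there mirror your Steps~1--3). The argument is correct. Two points worth tightening if you wanted to make this a full proof: in Step~3, the semicircle law per se only controls the bulk, so $\lambda_{\max}/\sqrt{M}\to 2$ needs the separate edge estimate (moment method / F\"uredi--Koml\'os, or the stronger Tracy--Widom convergence \cite{TW1}) rather than ``Wigner'' directly; and in Step~2, after the per-row KMT bound you still need to take a union bound over the $M=\lfloor xN^a\rfloor$ rows, which works because $K_3$ in~(\ref{KMT}) can be made arbitrarily large, but this should be stated since it is exactly the point that makes the uniform-over-paths control possible. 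Also, a cosmetic off-by-one: the path from $(0,0)$ to $(N,M)$ visits levels $0,\dots,M$, so the decomposition uses $M+1$ Brownian motions, matching the paper's convention in~(\ref{Br}).
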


\noindent Some fluctuation results are also available (see \cite{BSu,BM}). The limiting law is identified
as the Tracy-Widom distribution. This is closely related to the link between Brownian percolation and random
matrices we have  mentioned. See also \cite{I} for large deviations 
results at the Tracy-Widom scale. As usual in this type of models, the upper deviations are much larger 
than the lower ones (see \cite{J-shape} for last-passage percolation, \cite{DZ} and \cite{Sepp-LD} for the related
model of increasing subsequences in the plane and \cite{Ben} for directed polymers. See also \cite{Ledoux} for a general
discussion on the subject, including random matrices). 
This can be explained heuristically by noticing that, in order to increase the values 
of the $\max$, it is enough to increase the values of the environment along a single path. Decreasing the 
value of the $\max$ requires to decrease the values of the whole environment. 

\vspace{2ex}

We will be mostly concerned with non-zero temperature analogs to the LPP problem, namely directed polymers in random environment.
Let $P_{N,M}$ be the uniform probability measure on $\Omega_{N,M}$. 
For a given realization of the environment, we define on $\Omega_{N,M}$ the polymer measure
at inverse temperature $\beta$ as

\begin{eqnarray}\label{polymer}
 \mu^{\beta}_{N,M}(\omega = {\bf S}) = 
\frac{1}{Z_{\beta}(N,M)} e^{\beta H({\bf S})} P_{N,M}(\omega = {\bf S}), \quad    \forall \, {\bf S}\in\Omega_{N,M},
\end{eqnarray}

\noindent where  $Z_{\beta}(N,M)$ is a normalizing constant
called the (point-to-point) partition function, given by

\begin{eqnarray}
 Z_{\beta}(N,M) =  P_{N,M} \left(e^{\beta H(\omega)} \right).
\end{eqnarray}

It is easy to show the existence of the limit of the free energy in the regime considered above for the LPP. Indeed,
for $M=O(N^a)$ for some $a\in (0,1)$, the following limit holds for almost every realization of 
environment:

\begin{eqnarray}\label{thermo-thbox}
 \lim_{N\to +\infty} \frac{1}{N^{(1+a)/2}} \log Z_{\beta}(N,N^a) = 2 \beta.
\end{eqnarray}

\noindent The proof is straightforward as it applies directly the corresponding result for last-passage
percolation. Just note that

\begin{eqnarray}\label{sandwich}
-\log |\Omega_{N,N^a}| +\beta T(N,N^a) \leq \log Z_{\beta}(N,N^a) \leq  \beta T(N,N^a),
\end{eqnarray}

\noindent observe that $\log |\Omega_{N,N^a}| = O(N^a \log N)$, divide by $N^{(1+a)/2}$ and let $N$ goes
to $+\infty$. 

\smallskip

To obtain a non trivial regime, we have to ensure that the normalizing term is of the same order than 
$|\Omega_{N,N^a}|$. This will be done by increasing the temperature with $N$ (equivalently, decreasing $\beta$). 
Although this is not the usual situation in statistical mechanics, it allows us to recover a well known model 
of continuous-time directed polymer in a Brownian environment (see below for a precise definition). Until now, no 
precise relation between discrete models and this Brownian model has been given in the literature. 

\vspace{2ex}

Let us introduce more precisely the Brownian setting: let $(B^{(i)}_{\cdot})_i$ be an i.i.d. sequence of one-dimensional 
Brownian motions. Let $\Omega^c_{N,M}$ be the set of increasing sequences $0=u_0 < u_1 < \cdots
< u_{M} < u_{M+1}=N$. This can be identified as the set of piecewise constant paths with $M$ positive jumps of size $1$ 
in the interval $[0,N]$. Note that $|\Omega^c_{N,M}|= N^M / M!$, where $|\cdot|$ stands here for the Lebesgue measure. 
Denote by $P^c_{N,M}$ the uniform probability measure on $\Omega^c_{N,M}$.
For $\textbf{u} \in \Omega^c_{M,N}$, define

\begin{eqnarray}\label{Br}
 {\bf Br}(N,M)({\bf u}) = \nBr({\bf u}) = \sum^{M}_{i=0} (B^{(i)}_{u_{i+1}}-B^{(i)}_{u_i}),
\end{eqnarray}

\begin{eqnarray}\label{LBr}
 L(N,M) = \max_{\textbf{u} \in \Omega^c_{N,M}} {\bf Br}(\textbf{u}),
\end{eqnarray}

\begin{eqnarray}\label{ZBr}
 Z^{{\bf Br}}_{\beta}(N,M) = P^c_{N,M} \left( e^{\beta {\bf Br}(\textbf{u})} \right). 
\end{eqnarray}

\noindent  The functional (\ref{LBr}) is the aforementioned Brownian percolation problem from queueing theory.
Observe that it has the interesting property that

\begin{eqnarray*}
  L(N,M) = \sqrt{N }L(1,M),
\end{eqnarray*}

\noindent in law. This is due to the scaling properties of Brownian motions. It is now a well known fact
that $L(1,M)$ has the same law as the larger eigenvalue of a Gaussian Unitary random matrix (GUE, see \cite{Bar, OY} among other proofs).
As a consequence,

\begin{eqnarray}\label{TW}
 N^{1/6} \left( L(1,N) - 2N^{1/2} \right) \longrightarrow F_{2},
\end{eqnarray}

\noindent where $F_{2}$ denotes the Tracy-Widom distribution \cite{TW1}. It describes the fluctuations of the top
eigenvalue of the GUE and its distribution function can be expressed as

\begin{eqnarray*}
 F_2(s) = \exp \left\lbrace - \int^{+\infty}_s (x-s)u(s)^2 dx \right\rbrace,
\end{eqnarray*} 

\noindent where $u$ is the unique solution of the Painlevé II equation

\begin{eqnarray*}
 u'' = 2u^3 + xu,
\end{eqnarray*}

\noindent with asymptotics 

$$u(x) \sim \frac{1}{2\sqrt{\pi} x^{1/4}} \exp \left\lbrace -\frac{2}{3} x^{3/2} \right\rbrace.$$

\noindent The distribution function $F_2$ is non-centered and its asymptotics behavior is
as follows:

\begin{eqnarray*}
  F_2(s) \sim e^{\frac{1}{12} s^3},\, {\rm as} \, s\to -\infty, \quad 1-F_2(s) \sim e^{-\frac{4}{3} t^{3/2}}\, {\rm as}
  \, t\to +\infty.
\end{eqnarray*}

\noindent See \cite{AGZ} for more details about the Tracy-Widom distribution and random matrices in general.
In the discrete setting, it is shown in \cite{BM} that, for $M=N^a$ with $0<a<3/7$,

\begin{eqnarray*}
 \frac{T(N,N^a)-2N^{(1+a)/2}}{N^{1/2 - a/6}} \longrightarrow F_{2}.
\end{eqnarray*}

\noindent The proof uses similar approximations than the seminal work of Glynn and Whitt. See also \cite{BSu} for
similar results.

\vspace{2ex}

 The third display (\ref{ZBr}) is the partition function of the continuous-time directed polymers in Brownian environment. The 
free energy of this polymer model is explicit. Its exact value was first conjectured in \cite{OY} based on a generalized version of the Burke's 
Theorem and detailed heuristics. The proof was then completed in \cite{MOC}: 

\begin{theorem}[Moriarty-O'Connell]\label{MO-th}\cite{MOC}
\begin{eqnarray}
 \lim_{N\to +\infty} \frac{1}{N} \log Z^{{\bf Br}}_{\beta}(N,N) = f(\beta),
\end{eqnarray}

\noindent where
\begin{eqnarray}
	f(\beta)=
	\left\{ \begin{array}{ll}-(-\Psi)^*(-\beta^2)-2\log |\beta| & :\beta \neq 0\\ 0 & : \beta=0	
		\end{array} \right.
	\label{putz}
\end{eqnarray}

\noindent where $\Psi(m) \equiv \Gamma'(m)/\Gamma(m)$ is the restriction of the digamma function to $(0,+\infty)$, $\Gamma$
is the Gamma function

$$ \Gamma(m) = \int^{+\infty}_0 t^{m-1} e^{-t} dt,$$

\noindent and $(-\Psi)^*$ is the convex dual of the function $-\Psi$:

$$ (-\Psi)^*(u) = \inf_{m\geq 0} \left\lbrace mu + \Psi(m) \right\rbrace.$$
\end{theorem}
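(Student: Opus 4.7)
The plan is to follow the strategy initiated by O'Connell and Yor, which reduces the computation of $f(\beta)$ to a law of large numbers for an explicit Brownian queueing system together with a Burke-type stationarity result. First, I would establish the existence of the limit $f(\beta) := \lim_N N^{-1}\log Z^{\nBr}_\beta(N,N)$: by splitting an admissible path in $\Omega^c_{2N,2N}$ at an intermediate time and exploiting independence of the Brownian motions, one obtains an approximate super-multiplicativity which, combined with Gaussian concentration for $\log Z^{\nBr}_\beta$, yields almost sure convergence to a deterministic limit. No input specific to Brownian scaling is needed here.

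To identify this limit, I would introduce a free parameter $m>0$ via a Girsanov/drift change, writing $B^{(i)}_t = \tilde B^{(i)}_t + m t$. The exponent in (\ref{ZBr}) then splits as a telescoping ``boundary'' contribution of deterministic order $N$ plus the partition function associated with the drifted environment. On the drifted system, the Brownian analog of Burke's theorem (O'Connell-Yor) provides an explicit stationary description of the corresponding tandem-queueing dynamics: the marginals of the stationary ``output'' process are i.i.d., each distributed as an exponential functional of Brownian motion of the form $\int_0^\infty \exp(2\beta \tilde B_t - 2mt)\, dt$. By the Dufresne identity, this functional is inverse-Gamma distributed, and its log-moment can be computed in closed form in terms of the digamma function $\Psi$.

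Plugging back, $N^{-1}\log Z^{\nBr}_\beta(N,N)$ reduces, up to vanishing boundary corrections, to the empirical mean of $N$ i.i.d. copies of an explicit random variable plus a deterministic drift correction. Passing to the limit yields an identity of the form
\[
f(\beta) \geq m\beta^2 - \Psi(m) - 2\log|\beta|
\]
valid for each admissible $m>0$, with a matching upper bound coming from optimality of the Burke stationary solution (or, alternatively, from a standard variational argument comparing an arbitrary path configuration to the stationary one). Since $m$ is a free parameter, one then optimizes; the supremum over $m>0$ is precisely $-(-\Psi)^*(-\beta^2)$, producing the formula (\ref{putz}).

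The main obstacle is the second step: rigorously proving the Brownian analog of Burke's theorem for the queueing dynamics implicitly encoded in $Z^{\nBr}_\beta$, and identifying the stationary marginals with the exponential functional above. This is an intertwining/reversibility argument, carried out via careful use of Girsanov's transformation and reflection coupling. A secondary technical point is the matching upper bound, which is not handed to us directly by stationarity and was the key missing ingredient that Moriarty and O'Connell supply on top of the earlier heuristic of O'Connell-Yor; establishing it with the required uniformity in $m$ is what pins the free energy down to the conjectured formula.
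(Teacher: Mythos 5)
The paper does not contain a proof of this theorem: it is stated as a result of Moriarty and O'Connell, cited as \cite{MOC}, and used as a black box. (The paper does prove existence of the limit free energy for the Brownian polymer in the $d$-dimensional setting in Theorem \ref{MO-th-d}, but only via superadditivity, with no explicit formula for $p(\beta,\alpha,d)$; the identification with the digamma expression in dimension one is entirely outsourced.) So there is no ``paper's own proof'' to compare against, and what you are really being asked to do is reconstruct the argument of \cite{OY,MOC}.

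Judged on those terms, your outline is a faithful summary of that argument. Subadditivity plus Gaussian concentration for existence of the limit, a drift change to introduce the free parameter $m$, the O'Connell--Yor Brownian Burke theorem to produce a stationary description, Dufresne's identity for $\int_0^\infty e^{2B_t-2mt}\,dt$ (inverse Gamma, log-moment $-\Psi(m)$ up to constants), and the final optimization over $m$ giving $\sup_{m>0}\{m\beta^2 - \Psi(m)\} - 2\log|\beta| = -(-\Psi)^*(-\beta^2) - 2\log|\beta|$ — these are indeed the ingredients. You also correctly locate the real difficulty: O'Connell--Yor's Burke theorem and Dufresne computation gave the conjectured formula essentially for free as a one-sided bound, and the contribution of Moriarty--O'Connell was to close the gap with the matching variational bound. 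One small caution: the precise statement of what is i.i.d.\ in the stationary picture needs care — it is the increments of a certain log-partition-function process along the anti-diagonal (the ``departure'' increments) that are exchangeable/stationary, and a time-change is needed before Dufresne's identity applies to the $\beta$-dependent functional you wrote, so the parameter $m$ in your heuristic bound must be rescaled consistently before it matches the digamma argument. These are bookkeeping issues, not structural ones; the strategy is the right one.
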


\vspace{2ex}

We now search for a 'regime' in which the limiting free energy of the discrete model is the same as the Brownian one. It turns out that a way to achieve this is to increase the temperature in the asymmetric discrete model, as $N$ tends to $+\infty$.
So the Moriarty-O'Connell polymer can be viewed as an approximation of a discrete polymer close to an axis at a very high temperature.

\begin{theorem}[The Moriarty-O'Connell regime]\label{MOregime}
 Let $\beta_{N,a} = \beta N^{(a-1)/2}$,

\begin{eqnarray}\label{ZMO}
 \lim_{N\to +\infty} \frac{1}{\beta_{N,a}N^{(1+a)/2}} \log Z_{\beta_{N,a}}(N,N^a) = f(\beta)/\beta
\end{eqnarray}

\end{theorem}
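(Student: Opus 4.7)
The plan is to approximate the discrete partition function $Z_{\beta_{N,a}}(N,N^a)$ by its Brownian analogue $Z^{{\bf Br}}_{\beta_{N,a}}(N,N^a)$ via a KMT strong embedding, and then use Brownian scaling to collapse the latter onto the diagonal Moriarty--O'Connell regime of Theorem \ref{MO-th}.

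Following \cite{KMT1,KMT2}, I will couple, for each row $j=0,\ldots,N^a$, the partial sums $S^{(j)}_n=\sum_{k=1}^{n}\eta(k,j)$ with a Brownian motion $B^{(j)}$ of variance $\sigma^2=Q(\eta^2)$ so that $\max_{n\leq N}|S^{(j)}_n-B^{(j)}_n|=O(\log N)$ holds with overwhelming probability. The exponential moment assumption on $\eta$ is exactly what yields this optimal rate, and a union bound preserves it across all $N^a+1$ rows simultaneously. A path $\nS\in\Omega_{N,N^a}$ is encoded by the horizontal positions $0\leq p_1\leq\cdots\leq p_{N^a}\leq N$ of its vertical steps, with energy decomposing row by row as $H(\nS)=\sum_{j=0}^{N^a}(S^{(j)}_{p_{j+1}}-S^{(j)}_{p_j})$ (setting $p_0=0,\,p_{N^a+1}=N$). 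A rounding map $\neu\in\Omega^c_{N,N^a}\mapsto\nS(\neu)\in\Omega_{N,N^a}$, adjusted by a mild perturbation to separate coincident coordinates, associates to each discrete path a cell of $\Omega^c_{N,N^a}$ of volume close to one; in this coupling the KMT bound summed over the $N^a+1$ rows gives $|H(\nS(\neu))-\nBr(\neu)|=O(N^a\log N)$ uniformly in $\neu$.

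This pointwise control, together with the combinatorial mismatch $\bigl|\log|\Omega_{N,N^a}|-\log(N^{N^a}/(N^a)!)\bigr|=O(N^{2a-1})$, gives
\begin{equation*}
\bigl|\log Z_{\beta_{N,a}}(N,N^a)-\log Z^{{\bf Br}}_{\beta_{N,a}}(N,N^a)\bigr|=O\bigl(\beta N^{(3a-1)/2}\log N\bigr)+O(N^{2a-1}),
\end{equation*}
which is $o(\beta N^a)$ for $a<1$. Since $\beta_{N,a}N^{(1+a)/2}=\beta N^a$, it suffices to compute the Brownian limit. There, the change of variable $u_i=Nv_i/N^a$ (whose Jacobian cancels the normalizing ratio $(N^a)!/N^{N^a}$) combined with Brownian scaling $B_{Nv/N^a}\stackrel{d}{=}N^{(1-a)/2}\tilde B_v$ yields the identity in law
\begin{equation*}
Z^{{\bf Br}}_{\gamma}(N,N^a)\stackrel{d}{=}Z^{{\bf Br}}_{\gamma N^{(1-a)/2}}(N^a,N^a).
\end{equation*}
Taking $\gamma=\beta_{N,a}$ normalizes the effective inverse temperature to $\beta$, so Theorem \ref{MO-th} applied at size $N^a$ gives $N^{-a}\log Z^{{\bf Br}}_{\beta_{N,a}}(N,N^a)\to f(\beta)$ in probability, and dividing by $\beta$ yields the conclusion.

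The main obstacle will be the first step. The rounding map must be constructed with care because discrete paths with several consecutive vertical steps correspond to zero-volume cells in $\Omega^c_{N,N^a}$; I expect to handle this either by a small perturbation of the vertical-step positions or by sandwiching $Z_{\beta_{N,a}}(N,N^a)$ between two Brownian integrals built on perturbed simplices. Equally delicate is the uniformity of the KMT coupling across all $N^a+1$ rows simultaneously, which is exactly where the exponential moment hypothesis enters; verifying that the resulting error $O(\beta_{N,a}N^a\log N)$ is negligible after rescaling by $\beta_{N,a}N^{(1+a)/2}=\beta N^a$ is where the constraint $a<1$ is used in a sharp way.
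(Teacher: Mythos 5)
Your proposal follows essentially the same route as the paper: KMT strong embedding row by row, a discretization correspondence between $\Omega_{N,N^a}$ and $\Omega^c_{N,N^a}$, Brownian scaling to pull $Z^{\nBr}_{\beta_{N,a}}(N,N^a)$ back to $Z^{\nBr}_{\beta}(N^a,N^a)$, and then Theorem~\ref{MO-th}. Two points where you differ in a way that matters.

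First, the direction of the path correspondence. You propose to go from discrete to continuous by a rounding map and worry (rightly) about zero-volume cells when several vertical steps share the same integer time. The paper goes the other way: starting from a continuous path $\ns\in\Omega^c_{N,a}$ with jump times $t_i$, it sets $T_i=\lfloor t_i\rfloor$ and defines a Gaussian Hamiltonian $H^g(\ns)$ on the continuous side, observing $P^c_{N,a}(e^{\beta H^g})=P_{N,a}(e^{\beta g(\nS)})$. In that direction the degenerate case ($t_i,t_{i+1}$ with the same floor) just produces a zero-length row segment, so no perturbation or sandwiching is needed; and there is no combinatorial $|\Omega_{N,N^a}|$ versus $N^{N^a}/(N^a)!$ mismatch to track, since both normalized partition functions are expressed as expectations over the same continuous path space. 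Your sandwich idea would presumably work, but the paper's choice of direction removes the difficulty at the source, so I would adopt it.

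Second, mode of convergence. After Brownian scaling, $Z^{\nBr}_{\beta_{N,a}}(N,N^a)$ only has the \emph{same law} as $Z^{\nBr}_{\beta}(N^a,N^a)$ for each fixed $N$ --- the random objects across different $N$ are not realizations on a single Brownian environment. So Theorem~\ref{MO-th} gives convergence in law (equivalently, in probability for a deterministic limit), which is what you state. The paper then adds a final step: for Gaussian disorder, $F(z)=N^{-a}\log P_{N,a}(e^{\beta_{N,a}\sum z(\nS_t)})$ is Lipschitz with constant $CN^{-a/2}$, and Gaussian concentration upgrades this to almost sure convergence, which the KMT coupling of Step~1 then transfers to general environments. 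If you want to match the statement proved in the paper (the $d$-dimensional version is explicitly $Q$-a.s.), this concentration step is needed and is not covered by your argument.

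The rest --- the uniform KMT error $O(N^a\log N)$ over paths, the observation that $\beta_{N,a}N^a\log N=O(N^{(3a-1)/2}\log N)=o(\beta_{N,a}N^{(1+a)/2})$, and the scaling identity $Z^{\nBr}_{\beta_{N,a}}(N,N^a)\stackrel{d}{=}Z^{\nBr}_{\beta}(N^a,N^a)$ --- agrees with the paper.
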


In Section \ref{asymmetric}, we will give a proof of a $d$ dimensional version of this fact. Unfortunately, we are no longer 
able to compute explicitly the free energy for the Moriarty-O'Connell model when $d\geq 2$. We can even treat more asymmetric cases, where the additional asymmetry
translates in a lost of dimensions in the limit (Section \ref{vac}). The proof of this fact is closely related to the continuity of the free energy of point-to-point directed polymers
at fixed temperature at the border of an octant wich is discussed in Section \ref{cptp}.

\vspace{3ex}
We then turn to the problem of computing the free energy of a directed polymers model with a drift that grows with $N$.
Let

\begin{eqnarray}\label{Zdrift}
  Z^{(h)}_{\beta,N} = \sum_{1\leq n \leq N} \oZ_{\beta}(n,N-n) e^{-h \times (N-n)},
\end{eqnarray}

\noindent where, for each $n$, $\oZ_N(n,N-n)$ is the (non-normalized) point-to-point
partition function

\begin{eqnarray}\label{nnptp}
 \oZ_{\beta}(n,N-n) = \sum_{\omega \in \Omega_{n,N-n}} e^{\beta H_N(\omega)}.
\end{eqnarray}

\noindent This can also be seen as a generating function or a Poissonization of the point-to-point partition function.
Recall that, when $N-n = O(N^a)$, Theorem \ref{2LPP} implies that

\begin{eqnarray*}
 \lim_{N \to +\infty} \frac{1}{\sqrt{n(N-n)}} \log \oZ_{\beta}(n,N-n) = 2\beta,
\end{eqnarray*}

\noindent as, in this regime, $\log |\Omega_{n,N-n}|$ is of much smaller order than $\sqrt{N(N-n)}$ (see also (\ref{thermo-thbox})).
The role of the drift $h$ in (\ref{Zdrift}) is to penalize the paths for which the final point is far from the horizontal axis. 
It has to be calibrated in order to favor final points such that $N-n=O(N^a)$.

\vspace{2ex}

Our first result for this model concerns the value of the free energy.

\begin{theorem}\label{strongdrift}
Take $h=h_N= \gamma N^{(1-a)/2}$. Then,
 \begin{eqnarray*} 
  \lim_{N\to +\infty} \frac{1}{N^{(1+a)/2}} \log Z^{(h_N)}_{\beta,N} = \frac{\beta^2}{\gamma},
 \end{eqnarray*}

\noindent for all environment laws such that $Q(e^{\beta \eta})< +\infty$ for all $\beta>0$.

\end{theorem}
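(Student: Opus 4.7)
The plan is to carry out a Laplace-type analysis on the sum. With the change of variable $k=N-n$, write
\[
Z^{(h_N)}_{\beta,N} = \sum_{k=0}^{N-1} \oZ_\beta(N-k,k)\, e^{-h_N k}.
\]
The heuristic guide is that Theorem \ref{2LPP}, combined with the non-normalized version of the sandwich (\ref{sandwich}), yields $\log\oZ_\beta(N-k,k)\approx 2\beta\sqrt{(N-k)k}$ whenever $k=o(N)$, since in that regime $\log|\Omega_{N-k,k}|$ is of lower order. Optimizing the deterministic exponent $2\beta\sqrt{(N-k)k} - \gamma N^{(1-a)/2}k$ over $k$ identifies a saddle at $k_\star=(\beta/\gamma)^2 N^a$ with maximum value $(\beta^2/\gamma)\,N^{(1+a)/2}$, which is the target.

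For the lower bound, I would keep only the single term $k=\lfloor (\beta/\gamma)^2 N^a\rfloor$, so that
\[
Z^{(h_N)}_{\beta,N}\ \geq\ \oZ_\beta(N-k,k)\, e^{-h_N k}\ \geq\ e^{\beta T(N-k,k)-h_N k}.
\]
Since $(N-k)/N\to 1$, applying Theorem \ref{2LPP} with $N$ replaced by $N-k$ and with $x=(\beta/\gamma)^2$ yields $T(N-k,k)/N^{(1+a)/2}\to 2\beta/\gamma$ in probability, while $h_N k/N^{(1+a)/2}\to \beta^2/\gamma$ by construction. This gives $\liminf_N N^{-(1+a)/2}\log Z^{(h_N)}_{\beta,N}\geq \beta^2/\gamma$.

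For the upper bound, I would use
\[
\log Z^{(h_N)}_{\beta,N}\ \leq\ \log N + \max_{1\leq k\leq N-1}\bigl[\log|\Omega_{N-k,k}|+\beta T(N-k,k)-h_N k\bigr]
\]
and split the maximization into two regimes. For $k\leq \delta N$ with $\delta>0$ small and fixed, the combinatorial term is at most $k\log(eN/k)$, and, assuming a uniform upper tail estimate of the form $T(N-k,k)\leq 2(1+\epsilon)\sqrt{(N-k)k}$ valid with probability tending to $1$ simultaneously over all such $k$, the corresponding deterministic maximization gives the bound $(1+\epsilon)^2(\beta^2/\gamma)\,N^{(1+a)/2}$ at leading order (the log term contributing only $O(N^a\log N)$ at the saddle). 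For $k>\delta N$, the penalty satisfies $h_N k\geq \gamma\delta N^{(3-a)/2}$, which dominates the crude bound $\log\oZ_\beta(N-k,k)=O(N\log N)$ obtained from $T\leq (N+1)\max_{(i,j)}\eta(i,j)$ and the hypothesis $Q(e^{\beta\eta})<+\infty$ for all $\beta>0$, because $(3-a)/2>1$ when $a<1$; this region therefore contributes negligibly. Sending $\epsilon\to 0$ after $N\to\infty$, then $\delta\to 0$, yields the matching upper bound $\beta^2/\gamma$.

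The hard part will be the uniform-in-$k$ upper tail estimate on $T(N-k,k)$ required in the first regime: the pointwise convergence from Theorem \ref{2LPP} is not enough, and a quantitative concentration result is needed, for instance a Talagrand-type concentration inequality for the LPP functional, or a uniform estimate extracted from the Brownian approximation underlying the proof of Theorem \ref{2LPP} together with Gaussian concentration for the Brownian model.
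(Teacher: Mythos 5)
You follow essentially the same Laplace-type plan as the paper's proof: parametrize the terminal point (your $k=N-n$ versus the paper's $u=(N-n)/n$), locate the saddle at $k_\star \sim (\beta/\gamma)^2 N^a$, and obtain the lower bound by keeping a single term at $k_\star$; that part is complete and correct and matches the paper's First Step. The upper bound, however, has a genuine gap, and it is larger than you acknowledge.

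The uniform tail estimate you postulate --- $T(N-k,k) \leq 2(1+\epsilon)\sqrt{(N-k)k}$ \emph{simultaneously} for all $1\leq k\leq \delta N$ with probability tending to $1$ --- is false as stated. For $k$ of order $1$, the ratio $T(N-k,k)/\sqrt{N-k}$ converges in distribution to the Brownian percolation functional $L(1,k)$, that is, to a GUE largest eigenvalue, which is a non-degenerate random variable; hence $Q\{T(N-1,1) > 2(1+\epsilon)\sqrt{N-1}\}$ tends to a strictly positive constant, not to zero. A deviation bound of the shape $Q\{T(N,M)>2\sqrt{NM}(1+\epsilon)\}\leq Ce^{-cM\epsilon^{3/2}}$ (as in Proposition \ref{non-asymptotics-discrete}) is useless when $M$ stays bounded, and a union bound over $k$ diverges at the small-$k$ end. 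Those small $k$ must be handled by a separate, crude argument --- this is the paper's Third Step, small-$u$ case, where Gaussian tails of the Hamiltonian show that $\oZ_\beta(N-k,k)\,e^{-h_N k}$ is negligible for $k\leq N^{1+\kappa_0}$ --- and it cannot be folded into the concentration estimate.

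At the other end, the remedy you propose (extract a uniform estimate from the Brownian approximation underlying Theorem \ref{2LPP}, together with Gaussian concentration) is exactly the paper's Second Step, but it only covers a thin-box window. The row-by-row KMT coupling produces an error of order $k\log N$, which must be $o(N^{(1+a)/2})$; this forces $k$ to be at most of order $N^{(1+a)/2-\delta}$, much smaller than $\delta N$. For the intermediate range $N^{(1+a)/2-\delta}\leq k\leq \delta N$, neither that approximation nor the crude bound you invoke for $k>\delta N$ applies directly; in the paper this window is treated by showing that the drift $h_N k \geq \gamma N^{1-\delta}$ dominates a crude bound on the maximal Gaussian energy. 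In sum, your two-regime split $k\leq\delta N$ versus $k>\delta N$ is too coarse: as in the paper, you need at least a very-small-$k$ regime handled by trivial bounds, a thin-box regime containing $k_\star$ where KMT plus the Brownian deviation inequality supply the concentration you want, and one or two wider regimes where the drift overwhelms crude bounds on the energy.
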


We can even give the correct order of the fluctuations of the free energy. The bounds we obtain have a certain flavor of variance bounds without being exactly such. 

\begin{theorem}\label{fluct23}
For all $a<1/3$ ($a<3/7$ for a Gaussian environment), there exists a constant $C>0$ such that, for all $N\geq 1$,

 \begin{eqnarray*}
  \frac{1}{C} N^{1-a/3} \leq 
Q \left\lbrace \left( \log Z^{h_N}_{\beta, N}-\frac{\beta^2}{\gamma} N^{(1+a)/2} \right)^2 \right\rbrace \leq C N^{1-a/3}.
 \end{eqnarray*}
\end{theorem}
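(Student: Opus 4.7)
The plan is to reduce the problem to an analysis of
$$
M_N := \max_{1 \leq n \leq N}\bigl[\beta T(n, N-n) - h_N(N-n)\bigr]
$$
via the sandwich already used in (\ref{sandwich}). Since $\exp(\beta T(n,N-n)) \leq \oZ_\beta(n, N-n) \leq |\Omega_{n,N-n}|\exp(\beta T(n,N-n))$, summing over $n$ in (\ref{Zdrift}) and noting that the drift $h_N = \gamma N^{(1-a)/2}$ renders contributions with $N-n \gg N^a$ exponentially small gives
$$
M_N \leq \log Z^{(h_N)}_{\beta,N} \leq M_N + O(N^a \log N).
$$
Since $N^a \log N \ll N^{1/2-a/6}$ when $a < 3/7$, it suffices to prove the bounds with $M_N$ in place of $\log Z^{(h_N)}_{\beta,N}$. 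The deterministic skeleton $g(m) := 2\beta\sqrt{(N-m)m} - h_N m$ is maximized at $m^* = (\beta/\gamma)^2 N^a$, taking the value $(\beta^2/\gamma) N^{(1+a)/2}$ that matches Theorem \ref{strongdrift}, and $|g''(m^*)| \asymp N^{(1-3a)/2}$; matching the quadratic decay $w^2 N^{(1-3a)/2}$ against the Tracy--Widom scale $N^{1/2-a/6}$ identifies the effective window width $w \asymp N^{2a/3}$ of the argmax.

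For the upper bound I invoke the LPP fluctuation estimates proved in \cite{BM, BSu}, valid precisely when $a<1/3$ (or $a<3/7$ in the Gaussian case), which provide uniform sub-exponential tails
$$
Q\bigl\{|T(n,N-n) - 2\sqrt{n(N-n)}| > s N^{1/2-a/6}\bigr\} \leq C e^{-cs^{3/2}}
$$
for $n$ in the effective window. Combined with the deterministic quadratic decay of $g$ outside that window, a union bound over the $O(N^{2a/3})$ relevant indices, and the layer-cake identity $E[X_+^2] = \int_0^\infty 2t\, P(X > t)\, dt$, this yields $Q\{(M_N - (\beta^2/\gamma) N^{(1+a)/2})_+^2\} \leq C N^{1-a/3}$. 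The contribution of the negative part is handled by $(M_N - (\beta^2/\gamma) N^{(1+a)/2})_- \leq (X_{n^*} - (\beta^2/\gamma) N^{(1+a)/2})_-$ with $X_{n^*} := \beta T(n^*, m^*) - h_N m^*$ and $n^* := N - \lfloor m^* \rfloor$, together with the matching lower-tail bounds for $T(n^*, m^*)$.

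For the lower bound I use the single-term inequality $M_N \geq X_{n^*}$. The Tracy--Widom convergence for $T(n^*, m^*)$ recalled in the introduction implies that $(X_{n^*} - (\beta^2/\gamma) N^{(1+a)/2})/N^{1/2-a/6}$ converges in distribution to a non-degenerate limit, so there exist $c, \eta > 0$ with
$$
Q\bigl(X_{n^*} - (\beta^2/\gamma) N^{(1+a)/2} \geq c N^{1/2-a/6}\bigr) \geq \eta
$$
for all $N$ large. On this event $M_N - (\beta^2/\gamma) N^{(1+a)/2} \geq c N^{1/2-a/6}$, and hence $Q\{(M_N - (\beta^2/\gamma) N^{(1+a)/2})^2\} \geq c^2 \eta N^{1-a/3}$, which transfers to $\log Z^{(h_N)}_{\beta,N}$ via the sandwich. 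The principal technical hurdle is producing the uniform sub-exponential control of $T(n, N-n)$ across the $O(N^{2a/3})$ indices in the effective window; this is exactly where the restriction on $a$ enters, matching the current range of validity of Tracy--Widom-scale fluctuation estimates near the boundary of the LPP octant.
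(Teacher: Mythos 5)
Your overall strategy is the same as the paper's: KMT coupling plus the Ledoux--Rider tail bounds for Brownian percolation (equivalently, the per-endpoint LPP tail estimates of Proposition~\ref{non-asymptotics-discrete}), a layer-cake integration for the second moment, and a single-term reduction at $m^*=(\beta/\gamma)^2N^a$ for the lower bound. You package the upper bound through the maximum $M_N$ and a sandwich, whereas the paper first proves moderate-deviation inequalities directly for $\log Z^{(h_N)}_{\beta,N}$ (Theorem~\ref{small-dev}) and then integrates; these are two ways of organizing the same computation. Your lower-bound argument, however, is a genuine (and needed) improvement over the paper's. The paper applies Jensen, lower-bounds $Q\{\log Z^{(h_N)}_{\beta,N}-x_N\}$ by $Q\{\beta T(n^*,N-n^*)-2x_N\}$, and then \emph{squares}, invoking ``the Tracy--Widom law has a strictly positive expected value.'' In fact $F_2$ has \emph{negative} mean ($\approx -1.77$), and the step $a\geq b\Rightarrow a^2\geq b^2$ is simply false for real numbers; the paper's chain of inequalities breaks. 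Your event-based argument $Q\{X_{n^*}-x_N\geq cN^{1/2-a/6}\}\geq\eta>0$, combined with $\log Z^{(h_N)}_{\beta,N}\geq X_{n^*}$, is sign-insensitive and is exactly the right way to deduce the lower bound.

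On the upper bound, be aware that the argument as you have written it does not quite close: a union bound over the $\Theta(N^{2a/3})$ endpoints in the effective window gives $Q\{M_N-x_N\geq sN^{1/2-a/6}\}\lesssim N^{2a/3}e^{-cs^{3/2}}$, and the layer-cake integral then only yields $CN^{1-a/3}(\log N)^{4/3}$, a polylogarithmic factor worse than what is claimed. Invoking the quadratic decay of the skeleton $g(m)$ away from $m^*$ does not by itself remove the factor, because within each block of width $N^{2a/3}$ around $m^*$ there are still $N^{2a/3}$ endpoints with comparable penalty, and the naive coarse-graining bound $\max_{m\in[m_j,m_{j+1}]}T(N-m,m)\leq T(N-m_j,m_{j+1})$ overshoots the target by $O(N^{1/2+a/6})$ rather than $O(N^{1/2-a/6})$. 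Closing this gap requires using the strong positive dependence of $T(N-m,m)$ for nearby $m$ (a modulus-of-continuity estimate at the Airy scale), not a pure union bound. Note that the paper has the same unresolved step: its proof of (\ref{dev-up}) establishes bounds on the per-endpoint probabilities $q(\epsilon,v)$ but never spells out the passage from these to a bound on $\log Z^{(h_N)}_{\beta,N}$, which involves precisely this summation over endpoints. So this is a shared difficulty rather than a defect unique to your write-up, but you should flag it rather than present the union bound as sufficient.
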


\noindent The proof is based on non-asymptotic deviation inequalities for the partition function. These are reminiscent of similar bounds for random matrices proved by Ledoux and Rider (\cite{LR}). Similar bounds were obtained in the context of LPP in \cite{I} for Gaussian or Bounded environments. We strongly believe that the properly rescaled fluctuations should converge to the Tracy-Widom distribution. However, we would need a more precise analysis to prove this afirmation (see Remark \ref{remark-TW}).
Note that the recent article \cite{Sepp-pol} includes fluctuation bounds for a (symmetric) one-dimensional model of directed polymers in a log-Gamma environment.

\vspace{1ex}


\vspace{3ex}

The rest of this work is organized as follows: 

\begin{itemize}

\item We prove the continuity of the point-to-point partition function for discrete models in Section \ref{cptp}. 

\item In Section \ref{dpbe}, 
we discuss the existence of the free energy
for the directed polymers in Brownian environments. 

\item In Section \ref{asymmetric}, we discuss the links between asymmetric directed polymers and
directed polymers in a Brownian environment. We give the proof of a multidimensional version
of Theorem \ref{MOregime} in Section \ref{asymmetric} and discuss a more asymmetric
situation in Section \ref{vac}.

\item Finally, we study the model of directed polymers with
a huge drift in Section \ref{phd}. Theorem \ref{strongdrift} is proved in Subsection \ref{sdfe}, while
the fluctuation bounds (Theorem \ref{fluct23}) are proved in Subsection \ref{sdfb}.
\end{itemize}

\vspace{2ex}

\noindent An extended version of this article can be found in the Thesis \cite{Mthesis}. It includes a complete review of the litterature about Brownian 
percolation and one-dimensional directed polymers in a Brownian environment. 

\subsection*{Aknowledgments} Most of this work was done while I was a PhD student at Paris 7. I would like to thank my advisor Francis Comets for his kind guidance. I also would like to thank Jean-Paul Ibrahim, Neil O'Connell and Nikolaos Zygouras for many valuable discussions and their comments on an early draft of this work, and to Thierry Bodineau for pointing out the reference \cite{Martin}.



\section{Continuity of the point-to-point partition function for the discrete model}\label{cptp}
We prove here the continuity of the point-to-point free energy seen as a function
from the octant $\{\nx \in \re^d:\, x_i \geq 0\}$ to $\re$. Only the
continuity at the boundary of the octant requires a proof, as the continuity in the interior is an easy consequence
of the concavity properties of the free energy (which itself follows from sub-additivity). 

For $\ny \in \re^d_+$, define 

\begin{eqnarray*}
 Z^{\beta}_N(\ny) = \sum_{\ns \in \Omega_{N \ny}} \exp \beta H(\ns),
\end{eqnarray*}

\noindent where $\Omega_{N\ny}$ is the set of directed paths from the origin to $N \ny$, which, by notational abuse, denotes
the point in $\ze^d$ which $i$-th coordinate is $\lfloor N y_i \rfloor$. Note that the dimension here is $d$ and not $d+1$ as usual.
We will be interested in directions
of the form $\ny_h = (h,\nx)$ with $\nx \in \re^{d-1}_+$ (i.e. $\nx \in \re^{d-1}$, $x_i >0$), and $h\geq 0$.
In this case, we just denote the partition function by $Z_N(h,\nx)$.
We also define the point-to-point free energy:

\begin{eqnarray*}
 \psi^{\beta}(\ny) = \lim_{N\to +\infty} \frac{1}{N} \log Z^{\beta}_N(\ny),
\end{eqnarray*}

\noindent and we adopt the convenient notation $\psi(h,\nx)$ for $\psi(\ny_h)$ (we also dropped the dependence in $\beta$).
$\psi$ is a function from the octant $\{\nx \in \re^d:\, x_i \geq 0\}$ to $\re$.

\begin{proposition}\label{prop-cont-PTP}
  \begin{eqnarray*}
    \lim_{h\downarrow 0} \psi(h,\nx) = \psi(0,\nx).
  \end{eqnarray*}
\end{proposition}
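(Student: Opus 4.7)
The plan is to bracket $\lim_{h\downarrow 0}\psi(h,\nx)$ from above and below by $\psi(0,\nx)$ separately.

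\emph{Lower bound (easy direction).} Every path $\ns'\in\Omega_{N(0,\nx)}$ lies entirely in the hyperplane $\{y_1=0\}$ and can be extended to a path in $\Omega_{N(h,\nx)}$ by appending $\lfloor Nh\rfloor$ horizontal ($e_1$) steps at its end. Summing over $\ns'$ gives
\[
Z^{\beta}_N(h,\nx) \;\geq\; Z^{\beta}_N(0,\nx) \,\exp\!\Bigl(\beta \sum_{k=1}^{\lfloor Nh\rfloor}\eta(k,\lfloor Nx_1\rfloor,\ldots,\lfloor Nx_{d-1}\rfloor)\Bigr).
\]
The extra exponent is a sum of $\lfloor Nh\rfloor$ centered i.i.d.\ variables with finite exponential moments, so its $L^2$-norm is $O(\sqrt{Nh})=o(N)$; dividing by $N$ and passing to the limit yields $\psi(h,\nx)\geq\psi(0,\nx)$ for every $h>0$.

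\emph{Upper bound (main step).} For $\limsup_{h\downarrow 0}\psi(h,\nx)\leq\psi(0,\nx)$, I decompose each $\ns\in\Omega_{N(h,\nx)}$ via the projection $\pi$ that removes its $e_1$-steps, producing a $(d-1)$-dimensional path $\tilde\ns\in\Omega_{N(0,\nx)}$. The fiber $\pi^{-1}(\tilde\ns)$, parametrised by the positions of the inserted horizontal steps, has cardinality $\binom{Nh+N\|\nx\|_1}{Nh}$, and the energy $H(\ns)$ is summed along a ``staircase'' in the $2D$ lattice $\{0,\ldots,\lfloor Nh\rfloor\}\times\{0,\ldots,N\|\nx\|_1\}$ in the environment $\zeta^{\tilde\ns}_{j,i}:=\eta(j,\tilde v_i)$ (i.i.d.\ for each fixed $\tilde\ns$ since the vertices $\tilde v_i$ of a directed path are distinct). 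Writing $W_N(\tilde\ns)$ for the resulting inner $2D$ partition function, $Z^{\beta}_N(h,\nx)=\sum_{\tilde\ns} W_N(\tilde\ns)$, and the bound
\[
\log W_N(\tilde\ns) \;\leq\; \log\binom{Nh+N\|\nx\|_1}{Nh} \;+\; \beta T^{(2D)}_N(\tilde\ns),
\]
with $T^{(2D)}_N(\tilde\ns)$ the last-passage value in the thin $\lfloor Nh\rfloor\times N\|\nx\|_1$ strip, separates an entropy term and an energy term. Stirling yields $(1/N)\log\binom{Nh+N\|\nx\|_1}{Nh}=O(h\log(1/h))$ as $h\to 0$, and the Glynn--Whitt asymptotics of Theorem~\ref{2LPP} in the thin-strip regime give $T^{(2D)}_N(\tilde\ns)/N=O(\sqrt h)$ with high probability; both vanish as $h\to 0$. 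Combined with the identity $\sum_{\tilde\ns}\exp(\beta\sum_i\eta(0,\tilde v_i))=Z^{\beta}_N(0,\nx)$ from the ``flat'' staircase contribution, one obtains $\psi(h,\nx)\leq\psi(0,\nx)+\varepsilon(h)$ with $\varepsilon(h)\to 0$.

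\emph{Main obstacle.} The chief difficulty is the uniformity of the thin-strip LPP estimate in $\tilde\ns$: there are $\exp(O(N))$ distinct $(d-1)$-dimensional paths and the environments $\zeta^{\tilde\ns}$ all share the underlying field $\eta$, so a naive union bound demands upper-tail large-deviation estimates for $T^{(2D)}_N$ at a rate faster than the combinatorial entropy of $\Omega_{N(0,\nx)}$. This is resolved either by sharp Glynn--Whitt-type upper-tail bounds (whose decay is enhanced precisely because the strip is thin), or by isolating the flat staircase contribution (which reconstructs $Z^{\beta}_N(0,\nx)$ verbatim) and controlling the excursions to levels $j\geq 1$ through averaging rather than maximising over $\tilde\ns$; the $O(h\log(1/h))$ entropy gain from the fiber multinomial and the $O(\sqrt h)$ LPP scaling then combine to give the desired $\varepsilon(h)\to 0$.
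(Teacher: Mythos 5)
Your lower bound is essentially the paper's (append the $e_1$-tail, use centering of $\eta$). Your upper bound, however, uses the \emph{dual} decomposition to the one in the paper, and this is where the argument has a real gap.

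The paper conditions on the sequence $(\nm_i)_{i\le Nh}$ of positions at which the path makes its $e_1$-jumps, so the outer sum runs over the set $J^N_{h,\nx}$ whose log-cardinality is $N\phi(h,\nx)+o(N)$ with $\phi(h,\nx)\to 0$ as $h\to 0$; the inner object $\Pi_i Z(i;\nm_i,\nm_{i+1})$ is then compared to $Z_N(0,\nx)$ directly via translation invariance and subadditivity, and the union bound over $e^{N\phi(h,\nx)}$ configurations is beaten by the concentration rate $e^{-c\epsilon^2 N}$ once $h$ is small. You instead condition on the $(d-1)$-dimensional skeleton $\tilde\ns$: the fiber (the staircase) has small log-entropy $O(Nh\log(1/h))$, which matches the paper's $\phi$, but the outer sum now runs over \emph{all} directed paths in $\Omega_{N(0,\nx)}$, whose log-cardinality is $cN$ with $c>0$ \emph{independent of} $h$. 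Your bound $\log W_N(\tilde\ns)\le\log|\mathrm{fiber}|+\beta T^{(2D)}_N(\tilde\ns)$ replaces the inner partition function by a thin-strip last-passage value, which is not comparable to the flat contribution $\beta\sum_i\eta(0,\tilde v_i)$ pathwise: the difference $T^{(2D)}_N(\tilde\ns)-\sum_i\eta(0,\tilde v_i)$ has mean of order $2N\sqrt{h\|\nx\|_1}$ (small for $h$ small, good) but its upper-tail probability at deviation $\epsilon N$ is at best $e^{-c'\epsilon^2 N}$ (or $e^{-cN h^{1/4}}$ using the LPP tail), with an exponent that does not grow as $h\to 0$. So a union bound over the $e^{cN}$ skeletons $\tilde\ns$ cannot be made to work for $h\to 0$, whereas a union bound over the $e^{N\phi(h)}$ jump-configurations can. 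This is precisely why the paper decomposes the other way: the whole point is to have the \emph{outer} entropy vanish with $h$, and to compare the \emph{inner} object to $Z_N(0,\nx)$ in expectation, not to an LPP maximum.

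You do flag this as the chief obstacle, but neither of your two proposed fixes closes it. Sharper thin-strip upper-tail bounds still saturate at a rate $\sim e^{-cNh^{\alpha}}$ for some $\alpha>0$, which cannot dominate $e^{cN}$ uniformly in small $h$. ``Averaging rather than maximising'' is the right instinct — one would want to show $\sum_{\tilde\ns}W_N(\tilde\ns)\le e^{o(N)}\sum_{\tilde\ns}e^{\beta\sum_i\eta(0,\tilde v_i)}$ without passing through a pointwise bound on the ratio — but the two sums share the level-$0$ environment $\eta(0,\cdot)$, so there is no factorization or independence to exploit, and the argument is not given. As it stands the upper bound is not proved. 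I would suggest switching to the paper's decomposition by jump positions, which makes both the entropy and the comparison to $\psi(0,\nx)$ work cleanly via $\psi(\ny)=\sup_N\frac1N Q\log Z_N(\ny)$ and Gaussian concentration of $\log\Pi_i Z(i;\nm_i,\nm_{i+1})$.
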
 

\begin{proof}
Each path from the origin to $N (h,\nx)$ can be decomposed into $Nh$ segments with constant first coordinate: for each path, 
there is a collection of points $(\nm_i)_{i\leq Nh}$ with $\nm_i \in \ze^{d-1}_+$ and such that
for each $0\leq i < Nh$, there is a segment of the path linking $(i,\nm_i)$ and $(i,\nm_{i+1})$.
So the partition function can be decomposed itself as

\begin{eqnarray}\label{subad-PTP}
 Z_N(h,\nx) = \sum_{(\nm_i)_i} \Pi_i Z(i; \nm_i,\nm_{i+1}),
\end{eqnarray}

\noindent where, for each $i$, $Z(i; \nm_i,\nm_{i+1})$ is a sum over directed paths linking $(i,\nm_i)$ and $(i,\nm_{i+1})$.
The collection of possible points $(\nm_i)_i$ runs over a set $J^{N}_{h,\nx}$ which cardinality satisfies
$\log |J^{N}_{h,\nx}| = N\phi(h,\nx) + o(N)$ for some $\phi(h,\nx) \to 0$ as $h\to 0$ (see remark below). 
We will analyze each summand of the right hand side of (\ref{subad-PTP}) separately:

\begin{eqnarray}\nonumber
 &&Q \left( \log \Pi_i Z(i; \nm_i,\nm_{i+1}) \right) =  \sum_i Q\left(\log  Z(i; \nm_i,\nm_{i+1})\right)\\
 \nonumber
 &\ & \quad \quad =  \sum_i Q\left(\log  Z(0; \nm_i,\nm_{i+1})\right)\\
 \nonumber
&\ & \quad \quad \leq Q \left( \log Z_N(0,\nx) + \beta \sum \eta(i,\nm_{i+1}) \right)\\
&\ & \quad \quad \leq  N \, \psi(0,\nx). \label{boundp-PTP}
\end{eqnarray}

\noindent The second equality follows by translation invariance; in the third line, we use the fact that the partition 
functions do not consider the environment at the starting point; the last inequality follows by subadditivity, as

\begin{eqnarray*}
 \psi(\ny) = \sup_N \frac{1}{N} Q \log Z_N(\ny),
\end{eqnarray*} 

\noindent and the fact that $Q\eta = 0$. Now, the concentration inequality 
implies that

\begin{eqnarray}\nonumber
  Q\left( \left| \log \Pi_i Z(i; \nm_i,\nm_{i+1}) - Q\log \Pi_i Z(i; \nm_i,\nm_{i+1})\right| \geq \epsilon N \right)
 \leq e^{-c \epsilon^2 N}.\\
 \label{conc-PTP}
\end{eqnarray}

\noindent for $\epsilon$ small enough (see \cite{LW} and \cite{CY_branch} Proposition 3.2.1-b). Using (\ref{subad-PTP}), we can see that, if

\begin{eqnarray*}
  \log Z_N(h,\nx) \geq N \, \psi(0,\nx) + \epsilon N,
\end{eqnarray*}

\noindent for some $\epsilon > 0$, then, for some $(\nm_i)_i \in J^N_{h,\nx}$, it must happen that

\begin{eqnarray*}
  \log \Pi_i Z(i; \nm_i,\nm_{i+1}) \geq N \, \psi(0,\nx) + \epsilon N - \log |J^N_{h,\nx}|.
\end{eqnarray*}

\noindent By (\ref{boundp-PTP}), this means that the quantity in the left hand side deviates more than
$ \epsilon N - \log |J^N_{h,\nx}|$ from its mean. By the asymptotics on $|J^N_{h,N}|$, for $h$ small enough, we will
have that $\log |J^N_{h,\nx}| < \epsilon N/2$, and then the inequality (\ref{conc-PTP}) applies. Then,

\begin{eqnarray*}
 Q\left(  \log Z_N(h,\nx) \geq N \, p(0,\nx) + \epsilon N \right) 
 \leq \exp \left\lbrace N\phi(h,\nx) - c\epsilon^2 N + o(N) \right\rbrace. 
\end{eqnarray*}

\noindent By taking $h$ even smaller if necessary, the right hand side of this inequality becomes summable. By
Borel-Cantelli we will then have that

\begin{eqnarray*}
   \log Z_N(h,\nx) \leq N \, \psi(0,\nx) + \epsilon N,
\end{eqnarray*} 

\noindent $Q$-almost surely for $N$ large enough. Dividing both sides by $N$ and taking the limit $N\to +\infty$,
we conclude that

\begin{eqnarray*}
 \psi(h,\nx) \leq \psi(0,\nx) + \epsilon,
\end{eqnarray*}

\noindent for $h$ small enough. We now have to check the reverse inequality. But it follows easily that

\begin{eqnarray*}
 \log Z_N(h,\nx) \geq  \log Z_N(0,\nx) + \beta \sum^{hN}_{i=0} \eta(N\nx, i).
\end{eqnarray*}

\noindent Recalling that the $\eta$'s are centered, dividing by $N$ and taking the limit
$N\to +\infty$ give that $\psi(h,\nx) \geq \psi(0,\nx)$.
\end{proof}

\begin{remark}\label{rem-phi}
 The function $\phi$ can be made explicit: as 

 \begin{eqnarray*}
  \log |J^{N}_{h,\nx}| = \prod^d_{i=2} \left( {}^{\lfloor Nx_i \rfloor + \lfloor Nh \rfloor}_{\quad \lfloor Nh \rfloor}\right),
 \end{eqnarray*}

\noindent by Stirling formula, we have $ \log |J^{N}_{h,\nx}| = N \phi(h,\nx) + o(N)$, with

 \begin{eqnarray*}
  \phi(h,\nx) = \sum_{{}^{2\leq i \leq d}_{x_i > 0}} \left( h \log \frac{x_i + h}{h} + x_i \log \frac{x_i + h}{x_i} \right).
 \end{eqnarray*}
\end{remark}

\begin{cor}
  The point-to-point free energy is continuous on $\re^d_+$.
\end{cor}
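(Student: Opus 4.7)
My plan is to split the proof into interior continuity, which follows from concavity of $\psi$, and boundary continuity, which is obtained by iterating Proposition \ref{prop-cont-PTP}.

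For the interior $\{\nx : x_i > 0 \text{ for all } i\}$, I would first argue that $\psi$ is concave on $\re^d_+$: super-additivity of $Q\log Z^{\beta}_N$ along rational directions together with Fekete's lemma and positive homogeneity give $\psi(\alpha \ny + (1-\alpha)\nz) \geq \alpha \psi(\ny) + (1-\alpha)\psi(\nz)$ first on a dense set and then throughout. Being concave on an open convex set, $\psi$ is automatically continuous on the interior, so the only work left is at the boundary.

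For a boundary point $\ny_0 \in \partial \re^d_+$ with vanishing coordinates indexed by a nonempty $I \subset \{1,\ldots,d\}$, and a sequence $\ny^n \to \ny_0$ in $\re^d_+$, I would induct on $d$. Let $\tilde\ny^n$ denote the point obtained from $\ny^n$ by setting every coordinate in $I$ equal to zero; then $\tilde\ny^n$ lies on the face $F_I = \{y_i = 0 : i \in I\}$, which is itself an orthant of dimension $d - |I|$, and $\tilde\ny^n \to \ny_0$ within $F_I$. Since the restriction of $\psi$ to $F_I$ is the free energy of a lower-dimensional polymer model, the inductive hypothesis gives $\psi(\tilde\ny^n) \to \psi(\ny_0)$. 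It then suffices to show $\psi(\ny^n) - \psi(\tilde\ny^n) \to 0$, which I would do by a telescoping chain in which each step sets a single coordinate of $I$ to zero, controlling each step by Proposition \ref{prop-cont-PTP} applied to the relevant coordinate (using the permutation-symmetry of the polymer model).

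The main obstacle is that the proposition is a pointwise statement with the transverse variable $\nx$ \emph{fixed}, whereas the telescoping chain involves transverse variables whose small components shrink with $n$. To resolve this, I would inspect the explicit entropy bound of Remark \ref{rem-phi}: using $h\log(1 + x/h) \leq x$ and $x\log(1 + h/x) \leq h$, every summand of $\phi(h, \nx)$ is bounded by $h + x_i$, so $\phi$ still vanishes when several of its arguments tend to zero simultaneously. The concentration inequality, the centered lower bound, and the Borel--Cantelli step in the proof of the proposition carry over unchanged, which yields a uniform version of Proposition \ref{prop-cont-PTP} strong enough to make each telescoping difference vanish and so close the induction.
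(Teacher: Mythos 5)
Your proof takes essentially the same route as the paper's: concavity (from subadditivity) gives interior continuity, and the boundary is handled by iterating Proposition \ref{prop-cont-PTP} --- the paper's one-line ``repeated use of the preceding Proposition'' is exactly your induction-plus-telescoping spelled out. You are also right to flag that the Proposition, as stated, is a pointwise limit for fixed transverse $\nx$, so iterating it along a sequence whose other small coordinates also shrink requires a uniform estimate on the entropy term; the paper leaves this implicit. However, the specific estimate you propose does not do what you claim: the bound $h\log(1+x_i/h)+x_i\log(1+h/x_i)\le h+x_i$ controls only those summands of $\phi(h,\nx)$ whose $x_i$ \emph{also} tend to zero, whereas for the coordinates $x_i$ that stay bounded away from zero the quantity $h+x_i$ does not vanish, so this bound alone cannot show $\phi\to 0$. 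For those remaining terms one should instead note that $0\le h\log\frac{x_i+h}{h}\le |h\log h|+h\log M$ and $0\le x_i\log(1+h/x_i)\le h$ for $x_i\in(0,M]$, both of which go to zero uniformly in $x_i$ as $h\downarrow 0$. Combining the two families of estimates gives the joint continuity of $\phi$ at $(0,\nx_0)$ that your telescoping step actually uses, and with that rewording the plan goes through and matches the paper's intent.
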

\begin{proof}
  The continuity in the interior of $\ze^d_+$ is a consequence of the concavity properties arising from the subadditivity.
See the proof of Theorem \ref{free-energy-d} where this is explained in the continuous setting.
The continuity at the boundary follows from repeated use of the preceding Proposition.
\end{proof}

\begin{remark}
In the one-dimensional case, a very precise asymptotic for the last-passage percolation is available. It implies that
$\psi(1,h) = 2\sqrt{h} + o(\sqrt{h})$ as $h \downarrow 0$ (see \cite{Martin}, Theorem $2.3$). 
\end{remark}

\begin{remark}
 This scheme of proof will reappear later in the proof of a certain continuity at the borders property for very asymmetric directed
polymers, in the regime where the limit is the Brownian free energy.
\end{remark}


\section{Directed Polymers in a Brownian Environment}\label{dpbe}

We will now generalize the Brownian setting introduced before to larger dimensions.

Let $\nx\in \ze^ d$ such $x_i \geq 1$ for all $i=1,...,d$. Let $M=\sum^d_{i=1} x_i$. This is
basically the length of a nearest-neighbor path from the origin ${\bf 0}$ to $\nx$.
Let $\Omega^c_{t,\nx}$ be the set of right-continuous paths $\ns$ such that:

$(i)$ $\ns_0={\bf 0}$ and $\ns_t = \nx$.

$(ii)$ $\ns$ performs exactly $M$ jumps, according to the coordinate vectors. 

\noindent So the {\it skeleton} of $\ns$
can be thought of as a discrete nearest-neighbor path form the origin to $\nx$. $\ns$ itself can be viewed as a directed
path in $\re^+ \times \ze^{d}$ starting from the origin at time $0$ and reaching the site $(t,{\nx})$ at time $t$. Let $P^c_{t,\nx}$ be the uniform measure on $\Omega^c_{t,\nx}$.

Now consider a family $\{ B(\ny):\, \ny\in \Lambda_{\nx}\}$ of independent Brownian motions, where $\Lambda_{\nx} = \{ \ny\in \ze^d: 0\leq y_i \leq x_i,\,
\forall \, i=1,...,d\}$. Define the energy of a path ${\bf s}$ in the following way: let $0=t_0 < t_1 < \cdots < t_M < t$ be the
jumps times of ${\bf s}$ and put $t_{M+1}=t$, then

\begin{eqnarray}\label{Br-d}
\nBr(\ns) = {\bf Br}(t,\nx)({\bf s}) = \sum^{M+1}_{k=1} \left( B_{t_k}({\bf s}_{t_k}) - B_{t_{k-1}}({\bf s}_{t_k}) \right).
\end{eqnarray}

\noindent The partition function of the directed polymers in Brownian environment at inverse temperature $\beta$ is

\begin{eqnarray}
Z^{\nBr}_{\beta}(t,\nx) = P^c_{t,\nx} \left( \exp \beta \nBr(\ns) \right).
\end{eqnarray}

\noindent We first prove the existence of the free energy in the linear regime.
 Take $\alpha \in \re^d$ with strictly positive entries.

\begin{theorem}\label{MO-th-d}
Let ${ \alpha} N$ be the point of $\ze^d$ whose $i$-th coordinate is equal to $\lfloor \alpha_ i N \rfloor$. Then the following 
deterministic limit

\begin{eqnarray}\label{free-energy-d}
p(\beta, \alpha, d) =\lim_{N \to +\infty} \frac{1}{N} \log Z^{\nBr}_{\beta}(N, {\alpha N} )
\end{eqnarray}
\noindent exists $Q$-a.s.. Moreover, the function $\alpha \mapsto p(\beta, \alpha, d)$ is continuous on its domain.

\end{theorem}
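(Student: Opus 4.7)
The plan is the classical super-additivity plus Gaussian concentration recipe, adapted to the Brownian path measure. Write $\tilde Z(N,\nx) = |\Omega^c_{N,\nx}|\, Z^{\nBr}_{\beta}(N,\nx)$ for the un-normalized partition function and $\nx_N = \lfloor\alpha N\rfloor \in \ze^d$ (coordinate-wise). To get super-additivity, restrict the integral defining $\tilde Z(N+N', \nx_{N+N'})$ to paths that sit at $\nx_N$ at time $N$. The portion of such a path on $[0,N]$ and the portion on $[N, N+N']$ see independent pieces of the environment: at every site they share only the single point $\nx_N$, where the Brownian motion splits into two increments over disjoint time intervals, which are independent. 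This factorises the restricted integral as
\[
\tilde Z(N+N', \nx_{N+N'}) \;\geq\; \tilde Z^{(1)} \cdot \tilde Z^{(2)},
\]
with $\tilde Z^{(1)}, \tilde Z^{(2)}$ independent and equidistributed with $\tilde Z(N, \nx_N)$ and $\tilde Z(N', \nx_{N+N'}-\nx_N)$, respectively. Taking logarithms and expectations yields super-additivity of $a_N := Q\log \tilde Z(N, \nx_N)$ up to an $O(1)$ rounding correction.

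By Fekete's lemma (in the variant that absorbs bounded errors) $a_N/N$ converges to some $\tilde p(\beta, \alpha, d)$. Stirling applied to $|\Omega^c_{N,\nx_N}|$ gives $\log|\Omega^c_{N,\nx_N}| = N c(\alpha) + O(\log N)$ for an explicit $c$ continuous in $\alpha$, so in mean $N^{-1}\log Z^{\nBr}_{\beta}(N,\nx_N)$ converges to $p(\beta,\alpha,d) := \tilde p(\beta,\alpha,d) - c(\alpha)$. To upgrade to almost sure convergence I would use Gaussian concentration: the map from the Brownian environment $\{B(\ny)\}$ to $\log Z^{\nBr}_{\beta}(N,\nx_N)$ is Lipschitz in the Cameron--Martin norm with constant $O(\beta\sqrt{N})$, since its gradient is $\beta$ times a polymer average of path-vectors whose total quadratic variation equals $N$. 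This yields
\[
Q\bigl(\bigl|\log Z^{\nBr}_\beta(N,\nx_N) - Q\log Z^{\nBr}_\beta(N,\nx_N)\bigr| > \epsilon N\bigr) \;\leq\; 2\,e^{-c\epsilon^2 N},
\]
and Borel--Cantelli concludes.

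For continuity, running the concatenation above with two different directions $\alpha_1, \alpha_2$ and intermediate joint point $\lfloor \alpha_1 N_1\rfloor$ at time $N_1$ produces, with $\theta = N_1/(N_1+N_2)$ rational,
\[
p(\beta, \theta\alpha_1 + (1-\theta)\alpha_2, d) \;\geq\; \theta\, p(\beta, \alpha_1, d) + (1-\theta)\, p(\beta, \alpha_2, d).
\]
Hence $\alpha \mapsto p(\beta,\alpha,d)$ is concave on the open cone $\{\alpha\in\re^d : \alpha_i > 0\}$, and any concave function on an open convex subset of $\re^d$ is automatically continuous. The main source of friction is the bookkeeping of rounding errors: $\lfloor\alpha N\rfloor + \lfloor\alpha N'\rfloor$ need not equal $\lfloor\alpha(N+N')\rfloor$, and the intermediate directions used in the concavity step need not sit on $\ze^d$. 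Each such defect is bounded uniformly in $N$, so produces only a $o(N)$ correction that is absorbed by the Fekete and concentration steps; verifying that these errors do not accumulate along the super-additive chain is the one genuinely technical point of the argument.
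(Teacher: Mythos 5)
Your proof takes the same route as the paper's: superadditivity via restricting paths to hit $\lfloor\alpha N\rfloor$ at the intermediate time, followed by a concavity argument for continuity in $\alpha$. Two remarks.

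First, the paper gets almost-sure existence directly from Kingman's subadditive ergodic theorem applied to the un-normalised $\log\tilde Z$, and explicitly notes in a remark that this makes Gaussian concentration superfluous for the existence statement. You instead prove convergence of the means by Fekete and then upgrade to a.s. convergence via the Lipschitz/Cameron--Martin concentration estimate and Borel--Cantelli. Both are correct; your Lipschitz constant $\beta\sqrt N$ is right (for a fixed path the increment $\nBr(\ns)$ is $\langle B, e_{\ns}\rangle$ with $\|e_{\ns}\|^2_{CM}=N$, and $\nabla\log Z^{\nBr}_\beta$ is a polymer average of the vectors $\beta e_{\ns}$). The Kingman route is slightly cleaner since it needs no concentration at all.

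Second, a small but genuine slip in the continuity step: the concatenation inequality holds for the \emph{un-normalised} partition function $\tilde Z$, hence gives concavity of $\tilde p$, not of $p=\tilde p - c$. After dividing by $|\Omega^c_{N,\nx}|$ a factor $|\Omega^c_{N_1,\nx_1}|\,|\Omega^c_{N_2,\nx_2}|/|\Omega^c_{N,\nx}| \le 1$ appears, which spoils the superadditivity for the normalised $Z^{\nBr}$. Moreover $c(\alpha)=|\alpha|(1-\log|\alpha|)$ is itself concave, and a difference of two concave functions need not be concave, so ``$p$ is concave'' does not follow. What you should say instead is that $\tilde p$ and $c$ are each concave on the open cone, hence continuous there, hence $p=\tilde p - c$ is continuous. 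This is exactly how the paper closes the argument (concavity is established for $\overline p(\beta,\cdot)$ for every $\beta$ including $\beta=0$, and continuity of $p$ then follows by combining the two), so the fix is a one-line change and the conclusion is unaffected.

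Everything else, including your accounting of the floor-function rounding errors as uniformly bounded and therefore absorbed into the $o(N)$ corrections, is sound and is implicitly what the paper does as well.
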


\begin{proof}
 First, fix $\alpha$. The proof uses subadditivity. To lighten notation, denote $|\Omega_N|$ for $|\Omega_{N,{\alpha N}}|$. 
 We consider unnormalized versions of the partition function:
 
 \begin{eqnarray*}
  \int_{\Omega_{N+M}} e^{\beta \nBr(N+M, \nx_{N+M})(\ns)} 
  &\geq & \int_{\Omega_{N+M}} e^{\beta \nBr(N+M, \nx_{N+M})(\ns)} {\bf 1}_{\ns_N=\alpha N}\\
  &=& \int_{\Omega_N} e^{\beta \nBr(N, { \alpha N})(\ns)} \times
      \left( \int_{\Omega_M}e^{\beta \nBr(M, \alpha M)(\ns)}\right) \circ \theta_{N,\alpha N},
 \end{eqnarray*}
 
 \noindent where the shift $\theta_{k,\nx}$ means that we use the Brownian motions
 
 \begin{eqnarray*}
  \overline{B}^{(\ny)}(\cdot)=B^{(\ny + \nx)}(\cdot + k),
  \end{eqnarray*}
  
 \noindent to define $\nBr$. By subadditivity, it follows that there exists a deterministic function
 $\overline{p}(\beta, \alpha, d)$ such that
 
 \begin{eqnarray*}
  \overline{p}(\beta, \alpha, d) = \lim_{N\to +\infty} \frac{1}{N} \log \int_{\Omega_N} e^{\beta \nBr(N, \alpha N)(\ns)},
 \end{eqnarray*}
 
 \noindent $Q$-almost surely. Apply this with $\beta=0$ and the theorem follows with 
 $p(\beta, \alpha, d)= \overline{p}(\beta, \alpha, d) / \overline{p}(0, \alpha, d)$. Now, take $\alpha_1$ and $\alpha_2$ in $\re^d$ with strictly positive coordinates, and $\lambda \in (0,1)$.  Then,

\begin{eqnarray*}
Z^{\nBr}_{\beta}\left(N, N(\lambda \alpha_1 + (1-\lambda) \alpha_2)\right) \geq 
Z^{\nBr}_{\beta}\left(\lambda N,\lambda \alpha_1 N\right) \times Z^{\nBr}_{\beta}\left((1-\lambda)N, (1-\lambda) \alpha_2 N\right) \circ \theta_{\lambda N, \lambda \alpha_1 N}.
\end{eqnarray*}

\noindent Taking logarithms in both sides, dividing by $N$ and taking limits, leads to,

\begin{eqnarray*}
\overline{p}\left(\beta,\lambda \alpha_1 + (1-\lambda) \alpha_2, d\right) 
\geq \lambda \overline{p}\left(\beta, \alpha_1, d \right) 
+ (1-\lambda) \overline{p}\left(\beta, \alpha_2, d\right) 
\end{eqnarray*}

\noindent So $\alpha \mapsto \overline{p}(\beta, \alpha, d)$ is concave, and then continuous. As $p(\beta, \alpha, d) = \overline{p}(\beta, \alpha, d)/
\overline{p}(0, \alpha, d)$, it is also continuous.
\end{proof}

\begin{rem}
 Note that as we have true subadditivity, we can avoid the use of concentration. However, we can state the following result:

\begin{eqnarray}\nonumber
 Q \left( \left| 
\log Z^{\nBr}_{\beta}(N, \alpha N) - Q \log Z^{\nBr}_{\beta}(N, \alpha N) \right|> uN \right)
\leq C \exp \left\lbrace - \frac{Nu^2}{C \beta^2} \right\rbrace.\\
\label{conc-d}
\end{eqnarray}

This can be proved as Formula $(9)$ in \cite{RT}, using ideas from Malliavin Calculus.
\end{rem}


\section{Asymmetric Directed Polymers in a Random Environment}\label{asymmetric}

The central part of this Section is the proof of a multidimensional version of Theorem
\ref{MOregime}. 


Let $\nx\in \ze^ d$ such $x_i \geq 1$ for all $i=1,...,d$ and $N\geq 1$. Let $M=\sum^d_{i=1} x_i$ be the
distance between the origin and $x$ in $\ze^ d$. Let $\Omega_{N,\nx}$ be the
set of directed paths from the origin in $\ze^{d+1}$ to $(N,x)$ that is

\begin{eqnarray*}
\Omega_{N,\nx} = \{ \nS:\{0,...,N+M\} \to \ze^{d+1}: \nS_0=0,\, \nS_{N+M}=(N,\nx),\\ \forall \, t,\, 
\nS_{t+1}-\nS_t \in \{ e_i:\, i=1,...,d \} \}.
\end{eqnarray*}
 
 \noindent Consider a collection of i.i.d. random variables $\{ \eta(k,\nx):\, k\in \ze,\, \nx\in \ze^d\}$.
 We will assume that $Q(e^{\beta \eta})<+\infty$ for all $\beta \geq 0$.
 For a fixed realization of the environment, define the energy of a path $\nS\in \Omega_{N,\nx}$ as
 
 \begin{eqnarray}\label{Ham-d}
 H(\nS) = \sum^{N+M}_{t=1} \eta(\nS_t).
 \end{eqnarray}
 
 \noindent The polymer measure at inverse temperature $\beta$ is now defined as the measure on $\Omega_{N,\nx}$ such that
 
 \begin{eqnarray*}
 \frac{d\mu_{N,\nx}}{dP_{N,\nx}}(\nS) = \frac{1}{Z_{\beta}(N,\nx)} \exp \beta H(\nS),
 \end{eqnarray*}
 
\noindent where $Z_{\beta}(N,\nx)$ is the point-to-point partition function

\begin{eqnarray*}
Z_{\beta}(N,\nx) = P_{N,\nx} \left( \exp \beta H(\nS) \right).
\end{eqnarray*}

\vspace{2ex}

We will be interested in the limit as $N$ grows to infinity and $\nx=\nx_N$, with $|\nx_N| \to +\infty$ with $N$ in an
appropriate way. Take $\alpha \in \re^d$ with strictly positive coordinates. Let $\alpha N^a$ be the point in $\ze^d$ 
which $i$-th coordinate is equal to $\lfloor \alpha_i N^a \rfloor$. The following theorem
is the generalization to $\ze^d$ of Theorem \ref{MOregime}.

\begin{theorem}\label{MOregime-d}
 Let $\beta_{N,a} = \beta N^{(a-1)/2}$. Then,

\begin{eqnarray}\label{ZMO-d}
 \lim_{N\to +\infty} \frac{1}{\beta_{N,a}N^{(1+a)/2}} \log Z_{\beta_{N,a}}(N,\alpha N^a) = p(\beta, \alpha, d)/\beta,
\end{eqnarray}

\noindent$Q$-almost surely, where $p(\beta, \alpha, d)$ is the free energy of the continuous-time directed polymer in a Brownian environment
as in (\ref{free-energy-d}).

\end{theorem}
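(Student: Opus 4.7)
The plan mirrors Glynn--Whitt: use the KMT strong embedding of the i.i.d.\ environment into a family of independent Brownian motions, pass to the continuous-time Brownian polymer, and exploit Brownian scaling to reduce the limit to Theorem \ref{MO-th-d}. For brevity assume $Q(\eta^2)=1$; a general variance only rescales $\beta$. For each site $\ny\in\Lambda_N:=\prod_{i=1}^d[0,\alpha_i N^a]\cap\ze^d$, KMT (for which the exponential-moment hypothesis on $\eta$ is exactly the input) produces on an enlarged probability space an independent family $\{B^{(\ny)}\}_{\ny\in\Lambda_N}$ of standard Brownian motions with
\[
\max_{k\leq N}\Bigl|\sum_{t=1}^{k}\eta(t,\ny)-B^{(\ny)}_k\Bigr|\leq C\log N
\]
with probability at least $1-N^{-\kappa}$ for any prescribed $\kappa>0$; a union bound over the $O(N^{ad})$ sites of $\Lambda_N$ combined with Borel--Cantelli makes this simultaneous in $\ny$, $Q$-almost surely for all large $N$.

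Any discrete path $\nS\in\Omega_{N,\alpha N^a}$ is encoded by its skeleton $\ny_0=0,\ldots,\ny_M=\alpha N^a$ (with $M=\sum_i\alpha_iN^a$) and the sequence $0=s_0\leq\cdots\leq s_{M+1}=N$ of times at which the spatial sites are updated. Its energy telescopes into partial sums of $\eta$ over the vertical segments, and under the above coupling
\[
H(\nS)=\sum_{i=0}^{M}\bigl(B^{(\ny_i)}_{s_{i+1}}-B^{(\ny_i)}_{s_i}\bigr)+E(\nS),\qquad |E(\nS)|\leq 2(M+1)C\log N,
\]
uniformly in $\nS$. Multiplying by $\beta_{N,a}=\beta N^{(a-1)/2}$ makes the exponent error $O(N^{(3a-1)/2}\log N)=o(N^a)$ since $a<1$. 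The leading term is precisely $\nBr(\ns)$ for the natural piecewise-constant lift $\ns$ of $\nS$ with integer jump times, so summing over discrete paths gives a Riemann-sum approximation of $Z^{\nBr}_{\beta_{N,a}}(N,\alpha N^a)$. The Riemann-sum error, controlled by the local H\"older regularity of Brownian motion at a further $O(\log N)$ cost, and the combinatorial mismatch $\log|\Omega_{N,\alpha N^a}|-\log|\Omega^c_{N,\alpha N^a}|=O(M^2/N)=O(N^{2a-1})$ are both $o(N^a)$, so
\[
\log Z_{\beta_{N,a}}(N,\alpha N^a)=\log Z^{\nBr}_{\beta_{N,a}}(N,\alpha N^a)+o(N^a).
\]

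The Brownian scaling identity $Z^{\nBr}_\beta(cT,\nx)\stackrel{d}{=}Z^{\nBr}_{\beta\sqrt c}(T,\nx)$, applied with $c=N^{1-a}$ and $T=N^a$, yields $Z^{\nBr}_{\beta_{N,a}}(N,\alpha N^a)\stackrel{d}{=}Z^{\nBr}_\beta(N^a,\alpha N^a)$, so Theorem \ref{MO-th-d} gives $N^{-a}\log Z^{\nBr}_\beta(N^a,\alpha N^a)\to p(\beta,\alpha,d)$ almost surely. Combined with $\beta_{N,a}N^{(1+a)/2}=\beta N^a$, this produces the stated limit in distribution. The principal obstacle is precisely that Brownian scaling is only an identity \emph{in law}, whereas the theorem asks for $Q$-almost sure convergence: to upgrade, one combines the pathwise KMT estimate with a concentration inequality for the discrete $\log Z$ (available under the exponential-moment hypothesis, see e.g.\ \cite{LW} and the bound (\ref{conc-PTP}) used in Section~\ref{cptp}) and a Borel--Cantelli argument centered on the deterministic limit of the expectations. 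A secondary technicality is justifying the Riemann-sum passage for the unbounded exponential Brownian integrand, which is handled by truncating on a high-probability event where all Brownian oscillations over unit intervals are $O(\sqrt{\log N})$.
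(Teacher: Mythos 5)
Your proposal is correct and follows the same four-step route as the paper's proof: KMT coupling of the environment with independent Brownian motions (union bound over the $O(N^{ad})$ spatial sites plus Borel--Cantelli), control of the discrete-to-continuous transfer via Brownian oscillations on unit intervals, Brownian scaling to invoke Theorem~\ref{MO-th-d}, and concentration plus Borel--Cantelli to upgrade convergence in law to $Q$-a.s.\ convergence. The only differences are bookkeeping: the paper works with \emph{normalized} partition functions $P_{N,a}$ and $P^c_{N,a}$ throughout, so the floor-discretization of jump times matches the two path measures directly and there is no separate combinatorial-mismatch term to track; and for the final step it applies the classical Gaussian concentration inequality to the KMT-coupled Gaussian environment (Lipschitz constant $O(N^{-a/2})$ for $N^{-a}\log Z$, giving a summable tail $\exp\{-N^a u^2/C\}$), which sidesteps the need --- present in your appeal to the Liu--Watbled bound (\ref{conc-PTP}) --- to verify that the constant there degrades only like $\beta_{N,a}^{-2}$ as $\beta_{N,a}\downarrow 0$.
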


\vspace{2ex}

\begin{remark}
 To lighten notation, in the following, $C$ will denote a generic constant whose value can vary from line to line.
 Also, we can consider $\alpha = (1,\cdots,1)$ for simplicity and introduce the notations $\Omega_{N,a} = \Omega_{N,N^a}$ and $P_{N,a}=P_{N,N^a}$, and 
similarly for their continuous counterparts.
\end{remark}

\subsection{Proof of Theorem \ref{MOregime-d}}
\vspace{2ex}

The proof is carried on in $4$ Steps.
Much of the computations in Steps $1$ and $2$ are inspired by \cite{BM,I}, while the scaling argument
in Step $3$ is already present in \cite{GW}.

\subsubsection{First Step: approximation by a Gaussian environment}
The central ingredient of this part of the proof is a strong approximation technique
by Koml\' os, Major and Tusn\' ady: 
 let $\{ \eta_t: t\geq 0\}$ denote an i.i.d. family of random variables, with $Q(\eta_0)=0$, $Q(\eta^2_0)=1$
and $Q(e^{\beta \eta_0})< +\infty$ for all $0\leq \beta \leq \beta_0$ for some $\beta_0 > 0$. Let $\{ g_t: t\geq 0\}$ denote an i.i.d. family of standard normal variables. Denote 

\begin{eqnarray*}
 S_N = \sum^N_{t=0} \eta_t, \quad T_N = \sum^N_{t=0} g_t.
\end{eqnarray*}

\begin{theorem}[KMT approximation]\label{KMT-th} \cite{KMT2}
The sequences $\{\eta_t:t\geq 0\}$ and $\{g_t:t\geq 0\}$ can be constructed in such a way that, for all $x>0$ and every $N$,
\begin{eqnarray}\label{KMT}
 Q \left\lbrace \max_{k\leq N} |S_k - T_k| > K_1 \log N + x \right\rbrace \leq K_2 e^{- K_3 x},
\end{eqnarray}

\noindent where $K_1, \, K_2$ and $K_3$ depend only on the distribution of $\eta$, and $K_3$ can be taken as large as
 desired by choosing $K_1$ large enough. Consequently, $|S_N-T_N|= O(\log N)$, $Q$-a.s..
\end{theorem}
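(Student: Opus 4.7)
The plan is to follow the classical Hungarian dyadic construction of Koml\'os, Major and Tusn\'ady. First I would reduce to the case $N = 2^n$ by an elementary interpolation argument, since both sides of \eqref{KMT} are well-behaved under padding. Then I would build the coupling between the partial sum walk $(S_k)$ and the Gaussian walk $(T_k)$ recursively along a dyadic tree: assuming I have already coupled $(S_0, S_{2^j}, S_{2\cdot 2^j}, \dots)$ with the corresponding $T$'s at a coarse scale, I would insert the midpoint partial sums $S_{2^{j-1}}, S_{3\cdot 2^{j-1}}, \dots$ jointly with their Gaussian counterparts in a way that respects the already-fixed endpoints. The content of such a recursive step is purely local: given two endpoints $S_0 = 0$ and $S_{2m} = s$, one has to couple the conditional law of $S_m$ with the conditional law of $T_m$ given $T_{2m}$, which is simply $\mathcal{N}(T_{2m}/2, m/2)$.

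The main technical obstacle, and the one I expect to be hardest, is the bound on the coupling error at a single dyadic level. For this I would rely on a strong local central limit theorem with Edgeworth corrections for the density of $S_m$, which is where the hypothesis $Q(e^{\beta\eta}) < \infty$ is used in full: it guarantees analyticity of the characteristic function in a strip, Cram\'er-type large-deviation control, and rapid decay of the Edgeworth remainders. Using Fourier inversion one shows that the conditional density of $S_m$ given $S_{2m} = s$, after centering and rescaling, is uniformly close to the standard normal density on a range that covers all $s$ with $|s| \leq C\sqrt{m}\log m$. A quantile-coupling argument on this conditional density then produces, at a single level, a pairing of $S_m$ and its Gaussian image whose error has a tail of the form $K'_2 \exp(-K'_3 y)$ for the deviation $y$ above an $O(1)$ constant.

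Given this single-level estimate, the global bound should follow by iterating along the dyadic tree. The coupling errors introduced at different levels are, by construction, conditionally independent given the coarser skeleton, so they can be controlled by a union bound combined with summation of the per-level tails. There are $\log_2 N$ levels, which explains the $K_1 \log N$ term: even if each level only costs an $O(1)$ constant with high probability, one must inflate the threshold by $\log N$ to union-bound over all levels. The free parameter $K_3$ can be increased at the cost of increasing $K_1$, reflecting the trade-off between how large a per-level constant one accepts and how strong a sub-exponential tail one wants. Finally, for general $N$ one deduces the statement from the dyadic case by an easy Bernstein-type estimate for the increment $S_N - S_{2^{\lfloor \log_2 N \rfloor}}$, which is absorbed into the constants.
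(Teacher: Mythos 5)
This theorem is not proved in the paper: it is imported verbatim from Koml\'os--Major--Tusn\'ady \cite{KMT2} and used as a black box, so there is no ``paper's proof'' to compare your argument against. What you have written is a reasonable high-level summary of the original KMT construction, and the architecture you describe is the right one: reduce to dyadic $N$, build the coupling recursively along a binary tree by inserting midpoint sums, at each level couple the conditional law of $S_m$ given the block sum with the corresponding Gaussian bridge law $\mathcal{N}(T_{2m}/2,\,m/2)$ via quantile matching, and union-bound the $\log_2 N$ levels, which is where the $K_1\log N$ term comes from.

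That said, your sketch leaves the genuinely hard content unaddressed. The entire difficulty of the KMT theorem is concentrated in the single-level quantile coupling inequality (Tusn\'ady's lemma and its generalizations): one must show that for a sum of $m$ i.i.d.\ variables with finite exponential moment there is a coupling $(S_m,\,\sigma\sqrt{m}\,Z)$ with $Z\sim\mathcal{N}(0,1)$ such that $|S_m-\sigma\sqrt{m}Z|\le C_1+C_2 S_m^2/m$ on an event of overwhelming probability, uniformly over the conditioning value of the block sum, and one must prove this with explicit constants good enough that the errors telescope across scales rather than accumulate. Saying that a local CLT with Edgeworth corrections plus Fourier inversion ``produces, at a single level, a pairing ... whose error has a tail of the form $K_2'\exp(-K_3'y)$'' names the ingredients but does not establish the estimate; the uniformity in the conditioning variable, the behavior in the moderate-deviation range $|s|\sim\sqrt{m}\log m$, and the quadratic correction term are precisely where the dozens of pages of \cite{KMT1,KMT2} (or the later accounts of Cs\"org\H{o}--R\'ev\'esz, Bretagnolle--Massart, Einmahl, Sakhanenko) are spent. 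So as a proof proposal it is an accurate roadmap of the standard argument, but the decisive lemma is still cited rather than proved, which is essentially the same status it has in the paper under review.
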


\vspace{2ex}
Now consider our environment variables $\{ \eta(t,\nx): t\in \ze,\nx \in \ze^d \}$. Use Theorem \ref{KMT-th} to couple each
 'row' $\eta(\cdot,\nx)$ with standard normal variables $g(\cdot,x)$ such that

\begin{eqnarray*}
 Q \left\lbrace \max_{k\leq N} |S(k,\nx) - T(k,\nx)| > C\log N + \theta \right\rbrace \leq K_2 e^{- K_2 \theta},\quad \forall \theta> 0,
\end{eqnarray*}

\noindent where $ S(k,\nx) = \sum^k_{t=0} \eta(t,\nx)$ and  $T(k,\nx) = \sum^k_{t=0} g(t,\nx)$.

\vspace{2ex}

Now, we need to decompose each path $\nS \in \Omega_{N,a}$ into its 'jump' times $\nT=(T_i)_i$ and its position between jump times $\nL=(L_i)
_i$. We say that $T$ is a jump time if one of the coordinates of $\nS$ other than the first changes between instants $T-1$
and $T$. We can order the jump times of $\nS$: $T_0=0< \cdots < T_{dN^a} < T_{dN^a+1}=N$. We can then define $L_i$ as the
point $\ny \in \ze^d$ such that $\nS_{T_i} = (T_i, \ny)$. We can rewrite the Hamiltonian (\ref{Ham-d}) as

\begin{eqnarray*}
H(\nS) = \sum^{dN^a}_{i=0} \Delta H(\nS,i),
\end{eqnarray*}

\noindent where

\begin{eqnarray*}
\Delta H(\nS,i) = \sum^{T_{i+1}-1}_{k=T_i} \eta(k,L_i).
\end{eqnarray*}

\noindent Define $g(\nS)$ and $\Delta g(\nS,i)$ just in the same way by replacing the variables $\eta$ by the Gaussians $g$.
Then,

\begin{eqnarray*}
 |H(\nS)-g(\nS)| \leq \sum^{dN^a}_{i=0} |\Delta H(\nS,i)-\Delta g(\nS,i)|.
\end{eqnarray*}

\noindent Let $\theta_N$ be an increasing function to be determined later and
$\Lambda_{N,a} = \{\ny \in \ze^d:\, 0\leq y_i \leq \lfloor  N^a \rfloor\}$:

\begin{eqnarray*}
 &\ & Q \left\lbrace |H(\nS)-g(\nS)| > 2dN^a \theta_N, \, {\rm for}\, {\rm some}
\, \nS \in \Omega_{N,a} \right\rbrace \\
&\leq& Q \left\lbrace \sum^{dN^a}_{i=1} |\Delta H(\nS,i)-\Delta g(\nS,i)| > 2dN^a \theta_N, \, {\rm for}\, {\rm some} \, 
 \nS \in \Omega_{N,a} \right\rbrace 
\end{eqnarray*}

\begin{eqnarray*}
&\leq & Q \left\lbrace \max_{i\leq dN^a}|\Delta H(\nS,i)-\Delta g(\nS,i)| > 2\theta_N, \, {\rm for}\, {\rm some}
\, \nS \in \Omega_{N,a} \right\rbrace \\
&\leq & Q \left\lbrace \max_{k\leq N} |S(k,\nx)- g(k,\nx)| ,\, {\rm for}\, {\rm some}\,
\nx \in \Lambda_{N,a} \right\rbrace\\
&\leq& |\Lambda_{N,a}| Q \left\lbrace \max_{k\leq N} |S_k - T_k| > \theta_N \right\rbrace,
\end{eqnarray*}

\noindent In order to apply Theorem \ref{KMT-th}, we have to take $\theta_N = K_1\log N + \epsilon_N$, and to apply Borel-Cantelli,
as $|\Lambda_{N,a}|\leq  N^{da}$, it is enough to take $\epsilon_N = c \log N$ with $c$ large enough to make $N^a e^{-K_3 \epsilon_N}$ summable.
Then, $Q$-a.s., $|H(\nS) - g(\nS)| \leq C N^a \log N $ for all $\nS \in \Omega_{N,a}$, for $N$ large enough. This shows that

\begin{eqnarray*}
 P_{N,a} \left( e^{\beta_{N,a} H(\nS)} \right) = P_{N,a} \left( e^{\beta_{N,a} g(\nS)} \right) O(e^{C\beta_{N,a} N^a \log N}).
\end{eqnarray*}

\vspace{2ex}
\noindent Recall that $\beta_{N,a} = \beta N^{(a-1)/2}$, so that $\beta_{N,a} N^a \log N = O(N^{(3a -1 )/2} \log N)$.
As $0<a<1$, we have $\beta_{N,a} N^a \log N << \beta_{N,a} N^{(a+1)/2} = \beta N^a$, and then

\begin{eqnarray*}\label{Z-KMT}
 \log Z_{\beta_{N,a}}(N,\alpha N^a) = \log Z^g_{\beta_{N,a}}(N,\alpha N^a) + o(\beta_{N,a} N^{(1+a)/2}),
\end{eqnarray*}

\noindent where the superscript $g$ means that the environment is Gaussian.

\noindent We can conclude that, if the limit free energy exists $Q$-a.s. for Gaussian environment variables, it exists for all environment variables having some finite exponential moments, and the limit is the same.

\vspace{2ex}
\subsubsection{Second Step: approximation by continuous-time polymers in Brownian environment}
Having replaced our original disorder variables by Gaussians, we can take them
as unitary increments of independent one-dimensional Brownian motions.
We then just have to control their fluctuations to replace the discrete paths by continuous paths in a Brownian environment. This is what will be done in the following paragraphs.

We first need to establish a correspondence between continuous paths an discrete ones.

Take ${\bf s}\in \Omega^c_{N,a}$, and recall the definition (\ref{Br-d}) for the Brownian Hamiltonian ${\bf Br}(\ns)$
and that $0=t_0 < t_1 < \cdots < t_{dN^a + 1}=N$ denote the jump times of
$\ns$. Let $l_i = \ns_{t_i}$.
The path ${\bf s}$ can be discretized by defining the following Gaussian Hamiltonian:

\begin{eqnarray}\label{Br-discrete}
 H^{{g}}(\ns) = 
\sum^{dN^a}_{i=0} \left( B^{(l_i)}_{\lfloor t_{i+1}\rfloor} - B^{(l_i)}_{\lfloor t_{i} \rfloor - 1} \right). 
\end{eqnarray}

\noindent This is equivalent to consider $g(\nS)$ where $\nS \in \Omega_{N,a}$ is defined through its jump times $T_i$ and successive positions $L_i$ by

\begin{eqnarray*}
T_i &=& \lfloor t_i \rfloor ,\\
L_k &=& l_i,\quad \forall \, T_i\leq k < T_{i+1},
\end{eqnarray*}

\noindent (Recall that the Gaussian variables obtained in the previous step are now {\it embedded} in the Brownian motions).
In this way,

\begin{eqnarray*}
 P^c_{N,a}\left(\exp {\beta H^{{\bf Br}}(\ns)}\right) = P_{N,a}\left(\exp {\beta g(\nS)}\right).
\end{eqnarray*}

\noindent We have now to approximate the previous expression by $Z^{{\bf Br}}_{\beta}(N,N^a)$. 
Take $\ns\in \Omega^c_{N,a}$:

\begin{eqnarray*}
 |H^{g}(\ns) &-& {\bf Br}(\ns)| =
\left|\sum^{dN^a}_{i=0} \left(B^{(l_i)}_{\lfloor t_{r+1}\rfloor } - B^{(l_i)}_{\lfloor t_{r} \rfloor -1}\right) 
-\sum^{dN^a}_{i=0} \left( B^{(l_i)}_{ t_{i+1} } - B^{(l_i)}_{ t_{i}} \right) \right|\\
&\leq& \sum^{dN^a}_{i=0} \left|B^{(l_i)}_{\lfloor t_{r+1}\rfloor } - B^{(r)}_{ t_{r+1} }\right|+
\sum^{dN^a}_{i=0} \left|B^{(l_i)}_{\lfloor t_{i}\rfloor } - B^{(l_i)}_{ t_{i}-1 }\right|\\
&\leq& 2 \sum^{dN^a}_{i=0} \sup_{{}^{0\leq s,\, t \leq N+1}_{\quad |s-t|<2}} |B^{(l_i)}_s - B^{(l_i)}_t|.
\end{eqnarray*}

\noindent This can be handled with basic properties of Brownian motion: denote by $x_N$ an increasing function to be determined,

\begin{eqnarray*}
 &\ &Q \left( \sum^{dN^a}_{i=0} \sup_{{}^{0\leq s,\, t \leq N+1}_{\quad |s-t|<2}} |B^{(l_i)}_s - B^{(l_i)}_t| > dN^a x_N,\,
 {\rm for}\, {\rm some}\, \ns \in \Omega_{N,a} \right)\\
&\leq& Q\left( \max_{1\leq i \leq dN^a} \sup_{{}^{0\leq s,\, t \leq N+1}_{|s-t|<2}} |B^{(l_i)}_s - B^{(l_i)}_t|> x_N\,
{\rm for}\, {\rm some}\, \ns \in \Omega_{N,a} \right)\\
&\leq& Q\left( \max_{\nx \in \Lambda_{N,a}} \sup_{{}^{0\leq s,\, t \leq N+1}_{|s-t|<2}} |B^{(\nx)}_s - B^{(\nx)}_t|> x_N \right)\\
&\leq& |\Lambda_{N,a}| Q\left( \sup_{{}^{0\leq s,\, t \leq N+1}_{\quad |s-t|<2}} |B_s - B_t|> x_N \right)
\end{eqnarray*}

\begin{eqnarray*}
&\leq& C N^{da} \sum^{N-2}_{i=0} Q\left( \sup_{i\leq t \leq i+3} B_t - \inf_{i\leq t \leq i+3} B_t > x_N \right)\\
&\leq& C N^{da+1} Q\left( \sup_{0\leq t \leq 3} |B_t| > \frac{x_N}{2} \right)\\
&\leq& C N^{da+1} Q\left(B_3 > \frac{x_N}{2} \right)\\
&\leq& C N^{da+1} e^{-C x^2_N}.
\end{eqnarray*}

\vspace{2ex}

\noindent With $x_N=\log N$ and recalling (\ref{Z-KMT}) from Step $1$, we see that $Q$-a.s., for $N$ large enough, 
\begin{eqnarray*}
 P_{N,a} \left( e^{\beta_{N,a} H(N,\alpha N^a)}\right) = P^c_{N,a} \left( e^{\beta_{N,a} Br(N,\alpha N^a)}\right) \times O(e^{\beta_{N,a} N^a \log N}).
\end{eqnarray*}

\noindent Again, this will imply that
\begin{eqnarray}\label{Z-cont}
 \log Z_{\beta_{N,a}}(N,\alpha N^a) = \log Z^{{\rm Br}}_{\beta_{N,a}}(N,\alpha N^a) + o(\beta_{N,a} N^{(1+a)/2}),
\end{eqnarray}

\subsubsection{Third Step: scaling}

Observe that, for a fixed path $\ns \in \Omega_{N,a}$,
\begin{eqnarray*}
\nBr(N,\alpha N^a)(\ns_{\cdot}) = \sqrt{N} \nBr(1,\alpha N^a)(\ns_{\cdot /N})
= N^{(1-a)/2} \nBr(N^a,\alpha N^a)(\ns_{\cdot \times N^{a-1}}),
\end{eqnarray*}

\noindent where the equalities hold in law. Note also that $\ns_{\cdot \times N^{a-1}}\in \Omega_{N^a,\alpha N^a}$.
It follows that

\begin{eqnarray*}
  Z^{{\rm Br}}_{\beta_{N , a}}(N,\alpha N^a) 
&=& P^c_{N,a}\left( \exp \beta_{N,a} {\bf Br}(N,\alpha N^a)(\ns) \right)\\
&=& P^c_{N^a,\alpha N^a}\left( \exp \beta_{N,a} N^{(1-a)/2} \nBr(N^a,\alpha N^a)(\ns_{\cdot \times N^{a-1}}) \right)\\
&=& P^c_{N^a,\alpha N^a}\left( \exp \beta \nBr(N^a,\alpha N^a)(\ns_{\cdot \times N^{a-1}}) \right).
\end{eqnarray*}

\noindent But the last expression is simply $Z^{{\bf Br}}_{\beta}(N^a,\alpha N^a)$ so that, by Theorem \ref{MO-th-d},

\begin{eqnarray}\label{Z-MO}
 \lim_{N\to +\infty} \frac{1}{N^a} \log Z^{{\rm Br}}_{\beta_{N,a}}(N,\alpha N^a) = p(\beta, \alpha, d).
\end{eqnarray}

\noindent From (\ref{Z-cont}) and (\ref{Z-MO}), we can deduce that the limit (\ref{ZMO-d})
holds in law.

\subsubsection{Final Step: concentration}
So far, we proved convergence in law for the original problem. But we can
write a convenient concentration inequality for the free energy with respect
to his average, in the Gaussian case. So, a.s. convergence holds for Gaussian,
and, according to step $1$, for any environment.

The classical concentration inequality for Gaussian random variables can be stated as
follows:

\begin{theorem}
 Consider the standard normal distribution $\mu$ on $\re^K$. If $f:\re^K \to \re$ is Lipschitz
 continuous with Lipschitz constant $L$, then
 
 \begin{eqnarray*}
 \mu \left( \,x: \, |f(\nx) - \int f d\mu| \geq u \right)\leq 2 \exp \{ - \frac{u^2}{2L^2} \}.
 \end{eqnarray*}
\end{theorem}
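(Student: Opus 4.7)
The plan is to invoke the Gaussian logarithmic Sobolev inequality of Gross and apply Herbst's argument to convert it into a subgaussian bound on the Laplace transform of $f$, from which the tail bound follows by the Chernoff--Markov trick. First I would reduce to the case when $f$ is smooth: convolving $f$ with a Gaussian mollifier produces a $\mathcal{C}^{\infty}$ approximant with Lipschitz constant still bounded by $L$ that converges to $f$ pointwise and in $L^{1}(\mu)$, so that both sides of the stated inequality pass to the limit. Under this reduction, Rademacher's theorem gives $|\nabla f| \leq L$ almost everywhere, which is what allows the gradient term in log-Sobolev to be controlled.

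Next I would apply the Gaussian log-Sobolev inequality
\begin{equation*}
\int g^{2} \log g^{2}\, d\mu - \left(\int g^{2}\, d\mu\right) \log \int g^{2}\, d\mu \;\leq\; 2 \int |\nabla g|^{2}\, d\mu
\end{equation*}
to $g = \exp(\lambda f/2)$ for $\lambda > 0$. A direct computation, combined with the pointwise bound $|\nabla f| \leq L$, produces the differential inequality
\begin{equation*}
\lambda H'(\lambda) - H(\lambda) \;\leq\; \frac{L^{2} \lambda^{2}}{2}, \qquad H(\lambda) := \log \int e^{\lambda f}\, d\mu.
\end{equation*}
This is equivalent to $(H(\lambda)/\lambda)' \leq L^{2}/2$, and integrating from $0$ (using $H(\lambda)/\lambda \to \int f\, d\mu$ as $\lambda \downarrow 0$) yields the subgaussian Laplace bound $H(\lambda) \leq \lambda \int f\, d\mu + L^{2} \lambda^{2}/2$.

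Finally, the Chernoff--Markov inequality gives, for each $\lambda > 0$,
\begin{equation*}
\mu\!\left( f - \int f\, d\mu \geq u \right) \;\leq\; \exp\!\left( -\lambda u + \tfrac{1}{2} L^{2} \lambda^{2} \right),
\end{equation*}
and optimizing in $\lambda = u/L^{2}$ yields the one-sided tail $\exp(-u^{2}/(2L^{2}))$. Applying the same argument to $-f$ (which is also $L$-Lipschitz) and taking a union bound produces the factor $2$ and the two-sided statement. The one non-trivial ingredient is the log-Sobolev inequality itself, which I would simply take as a black box; an equally clean alternative I would keep in reserve is the Itô route, realising the standard Gaussian as $B_{1}$ for a $K$-dimensional Brownian motion, writing the martingale representation $M_{t} := Q[f(B_{1}) \mid \mc{F}_{t}] = M_{0} + \int_{0}^{t} H_{s} \cdot dB_{s}$ with $|H_{s}| \leq L$ (from the Lipschitz property via the Clark--Ocone formula), and then applying the standard exponential martingale estimate to $M_{1} - M_{0}$, which delivers the same subgaussian tail without invoking log-Sobolev.
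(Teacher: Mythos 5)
The paper does not prove this statement; it is stated as a classical fact (standard Gaussian concentration / Borell--Tsirelson--Ibragimov--Sudakov inequality) with a pointer to Ledoux's lecture notes, and is then simply invoked in the concentration step of the main argument. Your proposal supplies an actual proof, and it is a correct one: the Herbst argument is precisely the textbook route. The computation checks out in detail -- plugging $g = e^{\lambda f/2}$ into the Gaussian log-Sobolev inequality with the sharp constant $2$, using $|\nabla f| \le L$ a.e., dividing through by $\int e^{\lambda f}\,d\mu$, and recognizing the left-hand side as $\bigl(\lambda H'(\lambda) - H(\lambda)\bigr)$ yields exactly the differential inequality $(H(\lambda)/\lambda)' \le L^2/2$; the boundary value $H(\lambda)/\lambda \to \int f\,d\mu$ as $\lambda \downarrow 0$ is correct; and Chernoff with the optimizer $\lambda = u/L^2$ gives the one-sided bound, with the factor $2$ from applying the same to $-f$. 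The mollification step to justify $|\nabla f| \le L$ a.e.\ and pass to the limit is the right way to handle non-smooth $f$. Your fallback via Clark--Ocone and the exponential martingale inequality for $M_t = Q[f(B_1)\mid\mathcal{F}_t]$ with $|H_s| \le L$ is equally standard and also delivers the sharp constant. So: where the paper treats this as a black box, you have given a complete and correct standalone argument; there is no gap.
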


\noindent For a detailed exposition of concentration of measures, see for example, the lecture notes
of Ledoux \cite{Ledoux-SF}. Define 

\begin{eqnarray*}
 F(z) = \frac{1}{N^{a}} \log P_{N,a}\left(e^{\beta_{N,a} \sum^{N+dN^a}_{t=1} z(\nS_t)}\right).  
\end{eqnarray*}

\noindent It is easy to prove that $F$ is a Lipschitz continuous function with Lipschitz constant $CN^{-a/2}$. By Gaussian concentration,
this yields

\begin{eqnarray}\nonumber
 Q \left\lbrace \left| \frac{1}{N^{a}} \log Z_{\beta_{N,a}}(N,\alpha N^a) -
    \frac{1}{N^{a}} Q \log Z_{\beta_{N,a}}(N,\alpha N^a) \right| > u \right\rbrace \leq 2 \exp - \frac{N^a u^2}{2C^2}.\\
 \nonumber
 \, \\ 
\label{MO-finalconc}
\end{eqnarray}

\noindent This ends the proof of the theorem. \hfill $\square$

\vspace{2ex}


\subsection{Very asymmetric cases}\label{vac}

We now consider an even more asymmetric case: let $\na=(a_1,\cdots,a_d)$ with $0\leq a_i \leq a$ for all $i$ but $a_i=a$ for exactly $d-l$ 
values of $i$, $1\leq l <d$, and consider paths from the origin to points of type
$\alpha N^{\na}$ with coordinates $\alpha_i N^{a_i}$, $\alpha_i > 0$.
\begin{theorem}
Let $\alpha'$ be the vector of $\re^{d-l}$ which coordinates are those of $\alpha$ for the indexes $i$
such that $a_i = a$. Then,

\begin{eqnarray*}
  \lim_{N\to +\infty} \frac{1}{\beta_{N,a}N^{(1+a)/2}} \log Z_{\beta_{N,a}} (N, \alpha N^{\na}) = p(\beta, \alpha', d-l)/\beta.
\end{eqnarray*}
\end{theorem}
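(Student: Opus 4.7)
The plan is to imitate the four-step template of the proof of Theorem \ref{MOregime-d}, supplemented by a ``continuity at the boundary'' property for the Brownian polymer free energy that plays the role Proposition \ref{prop-cont-PTP} plays for its discrete analog.

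First, I would apply Steps 1 and 2 of the proof of Theorem \ref{MOregime-d} verbatim. The KMT embedding and the Brownian approximation compare $\log Z_{\beta_{N,a}}(N, \alpha N^{\na})$ with $\log Z^{\nBr}_{\beta_{N,a}}(N, \alpha N^{\na})$ up to an error $o(\beta_{N,a} N^{(1+a)/2})$; the only modification is that the index set $\Lambda$ now has cardinality $O(N^{a_1+\cdots+a_d})$, which is still subexponential, so the Borel--Cantelli arguments go through unchanged. Step 3 (Brownian scaling with time factor $N^{a-1}$) yields, in distribution,
$$Z^{\nBr}_{\beta_{N,a}}(N, \alpha N^{\na}) = Z^{\nBr}_\beta(N^a, \alpha N^{\na}).$$
Setting $M := N^a$ and $b_i := a_i/a \in [0,1)$ for the scaled short exponents, the theorem reduces to establishing
$$\lim_{M\to\infty} \frac{1}{M}\log Z^{\nBr}_\beta\bigl(M, \alpha' M, (\alpha_i M^{b_i})_{a_i<a}\bigr) = p(\beta, \alpha', d-l).$$

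The main new content is this limit, which expresses that in the Brownian scaling the short coordinates (those with $a_i<a$) collapse. I would prove it by adapting the scheme of Proposition \ref{prop-cont-PTP} to the continuous-time Brownian setting. Any path $\ns$ reaching the target performs exactly $K := \sum_{a_i<a} \alpha_i M^{b_i} = o(M)$ jumps in the short coordinates. Recording the sequence $(\nz_j)_{0\leq j\leq K}$ of short-coordinate values visited yields a directed nearest-neighbor lattice path in $\ze^l_+$; since the sum of short coordinates strictly increases at every short-jump, all the $\nz_j$ are distinct, and the number of such skeletons is bounded by $l^K = \exp(o(M))$. Conditional on the skeleton and on the ordered short-jump times $0<t_1<\cdots<t_K<M$, the path decomposes into $K+1$ independent Brownian polymer paths in the $(d-l)$ long coordinates, independence holding because distinct $\nz_j$ index disjoint families of Brownian motions. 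Writing $Z^{\nBr}_\beta(M,\ldots)$ as the resulting sum over skeletons of products of independent $(d-l)$-dimensional sub-partition functions $\widetilde{Z}^{(j)}_\beta$, the upper bound follows from: (i) bounding $\sum_j \mathbb{E}\log \widetilde{Z}^{(j)}_\beta \leq M\cdot p(\beta, \alpha', d-l)+o(M)$ via subadditivity and concavity of the free energy (Theorem \ref{MO-th-d}); (ii) Gaussian concentration (\ref{conc-d}) around the mean applied summand-by-summand; and (iii) a union bound and Borel--Cantelli over the $\exp(o(M))$ possible skeletons and the $M^K/K!=\exp(o(M))$ volume of the time simplex. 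For the lower bound, restrict to paths that exhaust all short-coordinate jumps inside $[0,\epsilon M]$---these contribute an energy fluctuation of size $o(M)$ by Gaussian concentration for Brownian sums---and then behave on $[\epsilon M, M]$ as a pure $(d-l)$-dimensional Brownian polymer reaching $\alpha' M$; Theorem \ref{MO-th-d} together with Brownian scaling gives a lower bound $(1-\epsilon)\, p(\beta, \alpha', d-l) + o(1)$, which tends to the desired value as $\epsilon \downarrow 0$.

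Finally, the Gaussian concentration argument of Step 4 of Theorem \ref{MOregime-d} upgrades convergence in probability to $Q$-almost sure convergence, and Step 1 extends the result to all environments with finite exponential moments. The main obstacle is the upper bound in the continuity-at-border argument: one must check both that the number of short-coordinate skeletons together with the ordered time-simplex volume is $\exp(o(M))$ (which follows from $K=O(M^{\max_{a_i<a} b_i})=o(M)$, crucially using $\max_{a_i<a} b_i<1$) and that the Gaussian concentration estimate for each summand is uniform enough to survive the union bound.
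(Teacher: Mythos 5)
Your proposal is correct and follows essentially the same decomposition--concentration--subadditivity scheme as the paper, but you execute it at the Brownian-polymer level whereas the paper stays discrete. The paper decomposes the discrete partition function $Z_{\beta_{N,a}}(N,N^a,N^b)$ along the slices of the short $N^b$-coordinate (exactly as in Proposition~\ref{prop-cont-PTP}), bounds the expected log of each summand by $\log\oZ_0(N,N^a,0)+N^a\,p(\beta,1,1)+o(N^a)$ by simply re-invoking Theorem~\ref{MOregime-d} in $d-l$ dimensions for the bulk pieces, and finishes with the Gaussian concentration \eqref{MO-finalconc} together with a union bound over the $\exp(o(N^a))$-many skeletons. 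Your route instead pushes the KMT/Brownian approximation through first and then proves a ``continuity at the boundary'' statement for the Brownian free energy itself; this is a genuinely parallel argument that is conceptually symmetric with the discrete Proposition~\ref{prop-cont-PTP}, at the cost of carrying out a fresh decomposition in continuous time rather than reusing Theorem~\ref{MOregime-d} as a black box. One gap to fill: conditional on the short-coordinate skeleton $(\nz_j)$ and the short-jump times $(t_j)$, the Brownian partition function does \emph{not} factor into independent $(d-l)$-dimensional pieces unless you also record and sum over the long-coordinate positions $\ny_j$ at the break times $t_j$ (this is what $\nm_i\in\ze^{d-1}_+$ does in the paper's discrete decomposition); this extra sum contributes at most $\exp(O(K\log M))=\exp(o(M))$ configurations and so does not change the conclusion, but it must be built into both the skeleton count and the definition of the sub-partition functions $\widetilde Z^{(j)}_\beta$ for the product structure and the summand-by-summand concentration bound to be valid.
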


\begin{proof}
 The proof is very similar to the proof of Proposition \ref{prop-cont-PTP}. We will consider the simple case
$d=2$ and a final point of type $(N^a, N^b)$ with $b<a$. We then have to prove convergence to $p(\beta,1,1)/\beta$.
The general case follows easily. We can think of $h$ as $h=h_N=N^{(b-a)}$.

\noindent From the proof of Theorem \ref{MOregime-d}, we have to remember that

\begin{eqnarray*}
 \log \oZ_{\beta_{N,a}}(N,\alpha N^a) =  \log Z^{\nBr}_{\beta}(N^a,\alpha N^a) +  o(N^a)
\end{eqnarray*}

\noindent  Denote by $Z(N,M,L)$ (resp. $\oZ(N,M,L)$) the normalized (resp. non-normalized) partition function over discrete paths from the origin 
to $(N,M,L)$. We perform the same decomposition than before:

\begin{eqnarray*}
 &&Z_{\beta_{N,a}}(N, N^a, N^b) = \frac{\oZ_{\beta_{N,a}}(N,N^a, N^b)}{\oZ_0(N,N^a, N^b)}\\
&\ & \quad  = \frac{1}{\oZ_0(N,N^a, N^b)}\sum_{(\nm_i)_i} \Pi_i \oZ(i; \nm_i,\nm_{i+1})
\end{eqnarray*}

\noindent Here, $0\leq i \leq N^b-1$ and $\nm_{N^b}=N^a$.
Recalling Remark \ref{rem-phi}, the cardinality of the set $J_N$ of the possible configurations of $(\nm_i)$
satisfies $|J_N| \sim \exp \{ cN^{(a+b)/2} \log N \}$. 
For a fixed $\nm_i$,
recalling that the environment variables are centered,

\begin{eqnarray}\nonumber
 &&Q \left( \log \Pi_i \oZ_{\beta_{N,a}}(i; \nm_i,\nm_{i+1}) \right) 
=  Q\left(\log \Pi_i \oZ_{\beta_{N,a}}(0; \nm_i,\nm_{i+1})\right)\\
 \nonumber
 &\ & \quad \quad = \log \oZ_0(N,N^a,0) 
+ Q\left(\log \frac{\Pi_i \oZ_{\beta_{N,a}}(0;\nm_i,\nm_{i+1})}{\oZ_0(N,N^a,0)}\right)\\
 \nonumber
&\ & \quad \quad \leq \log \oZ_0(N,N^a,0)  + Q \left( \log Z_{\beta_{N,a}}(N,N^a,0)  \right)\\
 \nonumber
&\ & \quad \quad \leq \log \oZ_0(N,N^a,0)  + Q \left( \log Z^{\nBr}_{\beta}(N^a,N^a)\right) +  o(N^a)  \\
 \nonumber
&\ & \quad \quad \leq \log \oZ_0(N,N^a,0) + N^a \, p(\beta,1,1) + o(N^a). 
\end{eqnarray}

\noindent Now, if

\begin{eqnarray*}
\log Z_{\beta_{N,a}}(N,N^a,N^b) >  N^a \left( p(\beta,1,1) + \epsilon \right),
\end{eqnarray*}

\noindent there must exist some $(\nm_i)_i$ such that

\begin{eqnarray*}
\log \Pi_i \oZ_{\beta_{N,a}}(i; \nm_i,\nm_{i+1}) >  \log \oZ_0(N, N^a,N^b) + N^a \left( p(\beta,1,1) + \epsilon \right)- \log |J_N|.
\end{eqnarray*}

\noindent Using the fact that $\oZ_0(N, N^a,N^b) >  \oZ_0(N, N^a,0)$,  (\ref{MO-finalconc}) and the union bound, we find that

\begin{eqnarray*}
Q \left( Z_{\beta_{N,a}}(N,N^a,N^b) >  N^a \left( p(\beta,1,1) + \epsilon \right) \right) \leq |J_N| \exp \{ -c \epsilon^2 N^a\},
\end{eqnarray*}

\noindent for $\epsilon$ small enough. As $\log |J_N| = o(N^a)$, the RHS of the last display is summable.
The result follows by Borel-Cantelli.
\end{proof}

 


\section{One-dimensional directed polymers with a huge drift}\label{phd}

We now turn to the study of directed polymers with a drift growing with $N$.
This section contains the proofs of Theorems \ref{strongdrift} and \ref{fluct23}.

\subsection{The free energy}\label{sdfe}
Let us first sketch the proof of Theorem \ref{strongdrift}: we parametrize the terminal points conveniently:

\begin{eqnarray*}
 N=n(1+u).
\end{eqnarray*}

\noindent Thus $n=N/(1+u)$ and $N-n=Nu/(1+u)$. We can then rewrite (\ref{Zdrift}) as

\begin{eqnarray}\label{Zdrift2}
 Z^{(h)}_N = \sum_{u} \oZ_{\beta} \left( \frac{N}{1+u},\frac{Nu}{1+u}\right) \exp \left\lbrace -\gamma N^{(1-a)/2} \times \frac{Nu}{(1+u)}\right\rbrace.
\end{eqnarray}
 
\noindent Now, for $u$ in an interval $I_N=[N^{\kappa_0},N^{\kappa_1}]$, we will have

\begin{eqnarray}\label{unif}
 \oZ_{\beta} \left( \frac{N}{1+u},\frac{Nu}{1+u}\right) = \exp \left\lbrace
 2\beta \frac{N \sqrt{u}}{1+u} + o(1), 
\right\rbrace
\end{eqnarray}

\noindent uniformly in $u$. Then, 

\begin{eqnarray}\label{Zdrift3}
 Z^{(h)}_{\beta,N} \sim \sum_{u \in I_N}  \exp N \left\lbrace 2\beta \frac{ \sqrt{u}}{1+u} -\gamma N^{(1-a)/2} \times \frac{u}{(1+u)}\right\rbrace.
\end{eqnarray}

\noindent Define the function

\begin{eqnarray*}
 f_N(u)=2\beta \frac{ \sqrt{u}}{1+u} -\gamma N^{(1-a)/2} \times \frac{u}{(1+u)}.
\end{eqnarray*}

\noindent It attains its global maximum at a point $u^{*}_N \sim (\beta^2/ \gamma^2) N^{a-1}$ (in short, we will omit the 
dependence in $N$), with $f_N(u^*)\sim \beta^2 N^{(a-1)/2}/\gamma$. So, by
Laplace method, we will have

\begin{eqnarray*}
 Z^{(h)}_{\beta,N} = \exp \left\lbrace N  f(u^*_N) + o(1) \right\rbrace = \exp \left\lbrace N^{(1+a)/2} \beta^2/\gamma + o(1)\right\rbrace,
\end{eqnarray*}

\noindent which would finish the proof.

\begin{remark}
 The proof is split in three steps. The first one gives the lower bound in the Theorem, minoring the whole sum by one term, given by a $u$ very close to the minimizer. This is the easy part.

\noindent The second step will consist mainly in proving the uniformity in (\ref{unif}) (but replacing $=$ by $\leq$). This will be done by applying uniformly the KMT approximation in the whole interval $I_N$, and then applying some deviation inequality for the Brownian percolation.

\noindent The third step will be to prove that the $u$'s outside $I_N$ do not contribute to the sum.
\end{remark}

\begin{proof}[Proof of Theorem \ref{strongdrift}:]
 \noindent \textsc{First step:} We will now provide the lower bound: recall the notation in (\ref{Zdrift2}) and
  observe that for the value $u^*$, the asymptotics of $n$ and $N-n$ fit the situation studied in (\ref{thermo-thbox}). An easy computation yields:

 \begin{eqnarray*}
  &\ &\liminf_{N\to +\infty} \frac{1}{N^{(1+a)/2}} \log Z^{(h)}_{\beta,N} \\
  &\geq& \lim_{N\to +\infty} \frac{1}{N^{(1+a)/2}} \log \oZ_{\beta} \left( \frac{N}{1+u^*},\frac{Nu^*}{1+u^*}\right) \exp \left\lbrace -\gamma N^{(1-a)/2} \times \frac{Nu^*}{(1+u^*)}\right\rbrace \\
  &=& \frac{\beta^2}{\gamma}.
 \end{eqnarray*}

\vspace{2ex}

\textsc{Second step:} Let $\delta>0$ and take $\kappa_1= (a-1)/2 - \delta$ in order to define $I_N=[N^{\kappa_0},N^{\kappa_1}]$. Here, $\kappa_0 > -1$ is introduced to discard small values of $u$ that have to be treated separately. Note that, in this interval, $N-n \sim Nu \leq N^{1+\kappa_1} = o(N^{(a+1)/2})$. 

We first couple the environment variables $\{\eta(t,x): 1\leq t \leq N,\, 1\leq x \leq N^{\kappa_1}\}$ row by row with Brownian motions as in the proof of Theorem \ref{MO-th}. This yields

\begin{eqnarray*}\label{KMTunif}
  \oZ_{\beta} \left( \frac{N}{1+u},\frac{Nu}{1+u}\right) &=& \oZ^{\nBr}_{\beta} \left( \frac{N}{1+u},\frac{Nu}{1+u}\right) \times
  O(e^{N^{1+\kappa_1} \log N})\\
  &\leq& \exp \left\lbrace \beta L\left( \frac{N}{1+u},\frac{Nu}{1+u}\right) \right\rbrace \times
  O(e^{N^{1+\kappa_1} \log N}), 
\end{eqnarray*}

\noindent uniformly for $u\in I_N$, where $\oZ^{{\bf Br}}_{\beta}(N,M)$ denotes the unnormalized partition function of the Brownian model (In the following, we only make use of the domination by $L(\cdot, \cdot)$ which can also be guessed directly from the results in \cite{BM}). 
Note that (\ref{thermo-thbox}) holds for $\oZ^{{\bf Br}}_{\beta}(N,M)$ with $M=O(N^a)$, as $|\Omega^c_{N,M}|$ is small compared to $\sqrt{MN}$: 

\begin{eqnarray}\label{Ocont-bound}
 \log |\Omega^c_{N,M}|\sim \log \frac{N^{M}}{(M)!} = O(N^a \log N). 
\end{eqnarray}

We now search for a convenient upper bound for the (normalized) Brownian partition function:

\begin{eqnarray*}
 &\ &Q \left\lbrace Z^{{\bf Br}}_N \left( \frac{N}{1+u},\frac{Nu}{1+u}\right) > \exp \beta \frac{N \sqrt{u}}{1+u}(2+\epsilon_N) \right\rbrace \\
&\leq& Q \left\lbrace \max_{\omega}{\bf Br}\left( \frac{N}{1+u},\frac{Nu}{1+u}\right) >   \frac{N \sqrt{u}}{1+u}(2+\epsilon_N) \right\rbrace \\
&\leq& Q \left\lbrace \max_{\omega}{\bf Br}\left( 1,\frac{Nu}{1+u}\right) >   \sqrt{\frac{N u}{1+u}}(2+\epsilon_N) \right\rbrace \\
&\leq& C \exp \left\lbrace - \frac{1}{C}\frac{N\sqrt{u}}{1+u} \epsilon_N^{3/2} \right\rbrace.
\end{eqnarray*}

\noindent The last inequality follows from Ledoux \cite{Ledoux}, Section $2.1$ (see also Proposition \ref{non-asymptotics-L}). Taking $\epsilon_N=N^{-\theta}$ with $\theta>0$ 
small enough, and applying Borel-Cantelli, we conclude that, for $N$ large enough,

\begin{eqnarray*}
 Z^{{\bf Br}}_N \left( \frac{N}{1+u},\frac{Nu}{1+u}\right) \leq \exp \left\lbrace 2\beta \frac{N \sqrt{u}}{1+u} + o(1) \right\rbrace,
\end{eqnarray*}

\noindent for all $u\in I_N$. Now, thanks to (\ref{Ocont-bound}), this is still true with $\oZ^{{\bf Br}}$ instead of $Z^{{\bf Br}}$.
We then get

\begin{eqnarray*}
 \oZ_N \left( \frac{N}{1+u},\frac{Nu}{1+u}\right) \leq \exp \left\lbrace 2\beta \frac{N\sqrt{u}}{1+u} + o(1) \right\rbrace,
\end{eqnarray*}

\noindent uniformly for $u\in I_N$. Once the Third Step is achieved, this uniform bound and Laplace Method will finish the proof.

\vspace{2ex}

\textsc{Third Step:} We are now interested in values $u \leq N^{\kappa_0}$ and $u\geq N^{\kappa_1}$. Again we have to split the proof in three.

Let us first focus on small values of $u$. Recall that, in this region, by the KMT coupling, we can work directly with Gaussians. Take $\theta' > 0$.

\begin{eqnarray*}
&&Q \left\lbrace \oZ^g \left( \frac{N}{1+u},\frac{Nu}{1+u}\right) > e^{\beta N^{\theta'}} \right\rbrace 
\leq Q \left\lbrace T^g \left( \frac{N}{1+u},\frac{Nu}{1+u}\right) >  N^{\theta'} \right\rbrace \\
&\ & \quad \quad \quad \leq Q \left\lbrace  \exists \ns \in \Omega_{\frac{N}{1+u},\frac{Nu}{1+u}}: H(\ns) >  N^{\theta'} \right\rbrace \\
&\ & \quad \quad \quad \leq |\Omega_{\frac{N}{1+u},\frac{Nu}{1+u}}| \exp \{ - N^{2\theta'-1} \} \\
&\ & \quad \quad \quad \leq \exp \{ cN^{(1+\kappa_0) \log N} - N^{2\theta'-1} \}.
\end{eqnarray*}

\noindent So, choosing $\kappa_0$ small enough and $1+\kappa_0/2 < \theta' < (1+a)/2$, we get, by Borel-Cantelli and by a computation analogous to (\ref{Ocont-bound}), that for $N$ large enough,

\begin{eqnarray*}
\oZ^g \left( \frac{N}{1+u},\frac{Nu}{1+u}\right) = o\left(e^{N^{(1+a)/2}}\right),
\end{eqnarray*}

\noindent for all $u\leq N^{\kappa_0}$.

For $N^{(a-1)/2 - \delta} \leq u \leq N^{(a-1)/2 + \delta}$, we have to
 couple the environment row by row with Gaussians until $N-n=N^{(1+a)/2 + \delta}
$ (just conserve the coupling already done in Step $2$ and add the missing rows).
 This will yield an error uniformly of order $N^{(1+a)/2 + \delta} \log N$. The
 point is that for $\delta$ small enough, the drift will be large compared with
 the point-to-point partition functions and the error in the approximation. In
 fact,

\begin{eqnarray*}
 h \times (N-n) \geq \gamma N^{1-\delta}.
\end{eqnarray*}

\noindent Recall that  we are working with Gaussians, denote $\Omega_{N,u}=\Omega_{\frac{N}{1+u},\frac{Nu}{1+u}}$,

\begin{eqnarray*}
 Q \left\lbrace \max_{\omega \in \Omega_{N,u}} H_N( \omega) > N^{\theta'} \right\rbrace
\leq \exp (1+a)N^{(1+a)/2 + \delta}\log N - N^{2\theta' -1},
\end{eqnarray*}

\noindent and, by Borel-Cantelli (taking, of course, $(1+a)/2 + \delta < 2\theta' -1$),

\begin{eqnarray*}
 \oZ^g_{\beta} \left( \frac{N}{1+u}, \frac{Nu}{1+u} \right) e^{h \times (N-n)}
\leq \exp \left\lbrace (1+a) N^{(1+a)/2 + \delta}\log N + \beta N^{\theta'} - \gamma N^{1-\delta}
\right\rbrace,
\end{eqnarray*}

\noindent where, as usual, the overline denotes that the partition function is unnormalized and the superscript $g$ stands for Gaussian environment. To insure that the drift is larger than the other terms, we have to take $\theta' < 1-\delta$ and $(1+a)/2 + \delta < 1-\delta$, both holding for $\delta < (1-a)/4$ and $\theta$ small enough. 
Now, this is also enough to neglect the error in the approximation as it is of order $N^{(1+a)/2 + \delta}$ 
too. The first condition we have encountered, namely $(1+a)/2 + \delta < 2\theta' -1$ is satisfied for $\delta < (1-a)/6$ and $\theta' < 1-\delta$, so that, choosing $\delta$ and $\theta$ according to these last restrictions gives that

\begin{eqnarray}\label{Zvanish}
 \oZ_{\beta} \left( \frac{N}{1+u}, \frac{Nu}{1+u} \right)e^{-h \times (N-n)} \to 0,
\end{eqnarray}

\noindent as $N\to +\infty$ uniformly for $N^{(1-a)/2 - \delta} \leq u \leq N^{(1-a)/2 + \delta}$. 

We are then left with the values $u >  N^{(a-1)/2 + \delta}$. This is an easy task: we can dominate each point-to-point partition function by the whole partition function (without drift!): 

\begin{eqnarray*}
 Z_N=Z_{\beta,N}= \sum_{\omega \in \Omega_N} e^{\beta H(\omega)},
\end{eqnarray*}

\noindent where $\Omega_N$ is the set of directed nearest-neighbor paths of length $N$. $Z_N$ grows at most as $e^{CN}$ for some constant $C> \lambda(\beta)+ \log 2d$, as we can see from

\begin{eqnarray*}
 Q(Z_N \geq e^CN) \leq e^{-CN} QZ_N = e^{(\lambda(\beta)+\log 2d -C) N}
\end{eqnarray*}

\noindent and Borel-Cantelli. Now, for the range of $u$'s we are considering, the drift satisfies, 

\begin{eqnarray*}
 h (N-n) > N^{1+\delta'},
\end{eqnarray*}

\noindent for large $N$, whenever $\delta' < \delta$, and then (\ref{Zvanish}) holds in this interval as well.

\end{proof}


\subsection{Moderate deviations for the partition function}

We now discuss the fluctuation of $\log Z^{(h_N)}_{\beta,N}$. For technical reasons, we have to restrict 
to $a<1/3$ for variable with finite exponential moments, and to $a<3/7$ for Gaussian variables 
(see Remark \ref{rem-KMT} at the end of this section).

We start proving the two following deviation inequalities:

\begin{theorem}\label{small-dev}
For all $a<3/7$, 
there exists a constant $C>0$ such that, 
for all $N\geq 1$ and $0 \leq \epsilon \leq N^{1-a}$,

\begin{eqnarray}\label{dev-up}
 Q \left\lbrace \log Z^{(h_N)}_{\beta,N} \geq \frac{\beta^2}{\gamma} N^{(1+a)/2}(1+\epsilon) \right\rbrace
 \leq C \exp \left\lbrace - \frac{N^a}{\!\! C} \epsilon^{3/2} \right\rbrace,
\end{eqnarray}

\noindent and for $a<1/3$, ($a<3/7$ for Gaussian disorder), $0\leq \epsilon \leq 1$,

\begin{eqnarray}\label{dev-down}
 Q \left\lbrace \log Z^{(h_N)}_{\beta,N} \leq \frac{\beta^2}{\gamma} N^{(1+a)/2}(1-\epsilon) \right\rbrace
 \leq C \exp \left\lbrace - \frac{\, N^{2a}}{\!\! C} \epsilon^3 \right\rbrace.
\end{eqnarray}
\end{theorem}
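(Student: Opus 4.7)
My plan is to upgrade each step of the proof of Theorem~\ref{strongdrift} from an asymptotic statement to a non-asymptotic one, by inserting Ledoux-type deviation inequalities for the Brownian last-passage percolation $L(N,M)$. These are of the form
\[
Q\bigl(L(N,M) \geq 2\sqrt{NM}(1+\delta)\bigr) \leq C\, e^{-cM\delta^{3/2}},
\qquad
Q\bigl(L(N,M) \leq 2\sqrt{NM}(1-\delta)\bigr) \leq C\, e^{-cM^2\delta^3},
\]
for $\delta$ small (see \cite{Ledoux} and Proposition~\ref{non-asymptotics-L}). With $M \sim N^a$ the two exponents become exactly $N^a\delta^{3/2}$ and $N^{2a}\delta^3$, which is precisely why they appear in the statement of the theorem.

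For the upper bound (\ref{dev-up}), I would keep the decomposition (\ref{Zdrift2}). Steps~2 and~3 of the proof of Theorem~\ref{strongdrift} already establish that, off a summable-probability event, the contribution of $u \notin I_N$ is exponentially smaller than $\exp\{\tfrac{\beta^2}{\gamma}N^{(1+a)/2}\}$, so the deviation event in (\ref{dev-up}) forces the same surplus onto the truncated sum $\sum_{u\in I_N}$. Since $|I_N|$ is polynomial in $N$, a pigeonhole step picks some $u \in I_N$ with $\log\oZ_\beta(n,N-n) - h(N-n) \geq \tfrac{\beta^2}{\gamma}N^{(1+a)/2}(1+\tfrac{\epsilon}{2})$. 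Applying the row-by-row KMT coupling and dominating the Brownian partition function by $\oZ^{\nBr}_\beta \leq |\Omega^c_{n,N-n}|\exp(\beta L(n,N-n))$ introduces only $O(N^a\log N)$ corrections, absorbed into a further $\epsilon/4$. Since $2\beta\sqrt{n(N-n)} - h(N-n)$ is maximized at $u = u^*$ with value $\tfrac{\beta^2}{\gamma}N^{(1+a)/2}$, the event entails $L(n,N-n) \geq 2\sqrt{n(N-n)}(1+\delta)$ for some $\delta$ of order $\epsilon$ (the worst case being $u \approx u^*$, where $2\sqrt{n^*(N-n^*)} \sim (2\beta/\gamma)N^{(1+a)/2}$; for $u$ farther from $u^*$ the required fractional surplus is proportionally larger). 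Ledoux's upper tail inequality with $M \sim N^a$ gives $e^{-cN^a\epsilon^{3/2}}$ per $u$, and the polynomial union bound over $I_N$ is absorbed into $C$.

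For the lower bound (\ref{dev-down}), I would drop all but the single term at $u = u^*$, so $Z^{(h_N)}_{\beta,N} \geq \oZ_\beta(n^*,N-n^*)\,e^{-h(N-n^*)}$. If $\log Z^{(h_N)}_{\beta,N} \leq \tfrac{\beta^2}{\gamma}N^{(1+a)/2}(1-\epsilon)$, then by the calibration of $u^*$ we get $\log\oZ_\beta(n^*,N-n^*) \leq 2\beta\sqrt{n^*(N-n^*)} - \epsilon\tfrac{\beta^2}{\gamma}N^{(1+a)/2}$. Via the pointwise bound $\oZ_\beta \geq e^{\beta T}$, with $T$ the discrete last-passage percolation, this yields $T(n^*,N-n^*) \leq 2\sqrt{n^*(N-n^*)}(1-\delta)$ for some $\delta$ of order $\epsilon/2$. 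In the Gaussian case, Ledoux's lower tail for $L$ applies directly through the embedding used in Section~\ref{asymmetric}; for general environments, one more row-by-row KMT approximation replaces $T$ by $L$ at the cost of $O(N^a\log N)$, still negligible against $\epsilon N^{(1+a)/2}$. The lower-tail inequality then delivers the claimed $e^{-cN^{2a}\epsilon^3}$.

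The main technical hurdle is to keep all three layers of approximation — the KMT row coupling, the volume factor $\log|\Omega^c|$, and the reduction to a single representative $u^*$ — uniformly smaller than the deviation scale $\epsilon N^{(1+a)/2}$ for every $u \in I_N$. The Tracy-Widom fluctuation scale $N^{(3-a)/6}$ on strips of width $M \sim N^a$ is only barely above the KMT row error $N^a\log N$, and the resulting bookkeeping is what fixes the thresholds $a<1/3$ in the general case and $a<3/7$ in the Gaussian case (where the KMT step is bypassed).
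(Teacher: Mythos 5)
Your lower-tail argument matches the paper's exactly: keep the single term at $u^*$, reduce to the discrete last-passage variable $T(n^*,N-n^*)$, and invoke the KMT-plus-Ledoux lower deviation bound (Proposition~\ref{non-asymptotics-discrete}). The core machinery for the upper tail is also right — union over terminal points, KMT row coupling, the observation that the AM--GM inequality $2\beta\sqrt{v}-\gamma v\leq \beta^2/\gamma$ with equality at $v=\beta^2/\gamma^2$ is what translates a deviation of $\log Z$ by $\epsilon$ into a deviation of $L(n,N-n)$ by order $\epsilon$, and Ledoux's upper tail with $M\sim N^a$ producing $e^{-cN^a\epsilon^{3/2}}$.

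The gap is in how you dispose of $u\notin I_N$. You propose to import the conclusion of Steps~2--3 of the proof of Theorem~\ref{strongdrift}, namely that ``off a summable-probability event'' the tail contribution is $o(e^{\tfrac{\beta^2}{\gamma}N^{(1+a)/2}})$. But that is a Borel--Cantelli statement designed for an a.s.\ limit, and the probability it controls does not depend on $\epsilon$; it is of order $e^{-cN^{a}}$ at best (see the subcase $u\leq N^{\kappa_0}$, where the exponent is $N^{2\theta'-1}$ with $\theta'<(1+a)/2$). The bound you must prove, $Ce^{-N^a\epsilon^{3/2}/C}$, must hold uniformly up to $\epsilon=N^{1-a}$, where it is $\asymp e^{-N^{(3-a)/2}/C}$, a far smaller quantity than $e^{-cN^a}$ whenever $a<1$. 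So the ``summable-probability event'' is not negligible at the required scale; the truncation to $I_N$ cannot be imported for free.

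What the paper does instead — and what you would need — is to \emph{not} truncate but bound the per-$v$ probability $q(\epsilon,v)=Q\{\beta T(N,vN^a)-\gamma v N^{(1+a)/2}\geq \tfrac{\beta^2}{\gamma}N^{(1+a)/2}(1+\epsilon)\}$ directly for every $v\geq 0$. For $v$ small it uses stochastic monotonicity of $T$; for $v$ moderate the AM--GM step as you describe; and for $v$ large it keeps the drift penalty $\gamma v$ explicitly, which yields terms like $\exp\{-c v N - c\epsilon^2 N/v\}$ that beat $e^{-N^a\epsilon^{3/2}}$ across the whole $\epsilon$-range. Your own remark that ``for $u$ farther from $u^*$ the required fractional surplus is proportionally larger'' is precisely the right heuristic — it just has to be applied quantitatively to \emph{all} $u$, not only to $u\in I_N$ after a truncation that the deviation-scale bookkeeping does not support.
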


\noindent These are consequences of similar non-asymptotics deviation inequalities for the top eigenvalue of GUE random matrices that
we recall here in the context of Brownian percolation (see \cite{LR}, Theorem $1$ and \cite{Ledoux}, Chapter $2$, for a complete discussion of this 
topic):

\begin{proposition}\label{non-asymptotics-L}
There exists a constant $C_1>0$ such that, for all $N\geq 1$ and $\epsilon \geq 0$,

\begin{eqnarray}\label{L-dev-up}
 Q \left\lbrace L(1,N) \geq 2 \sqrt{N}(1+\epsilon) \right\rbrace
 \leq C_1 \exp \left\lbrace - \frac{N}{\!\! C_1} \epsilon^{3/2} \right\rbrace,
\end{eqnarray}

\noindent and for $0\leq \epsilon \leq 1$,

\begin{eqnarray}\label{L-dev-down}
 Q \left\lbrace L(1,N) \leq 2\sqrt{N}(1-\epsilon) \right\rbrace
 \leq C \exp \left\lbrace - \frac{N^{2}}{C} \epsilon^3 \right\rbrace.
\end{eqnarray}
\end{proposition}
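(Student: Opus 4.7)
The plan is to reduce the estimates to the corresponding non-asymptotic tail inequalities for the top eigenvalue of a GUE matrix, via the Baryshnikov--O'Connell identity $L(1,N) \stackrel{d}{=} \lambda_{\max}(M_N)$ with $M_N$ drawn from the $N \times N$ Gaussian Unitary Ensemble (see \cite{Bar, OY}). Under this identification $\lambda_{\max}(M_N)$ concentrates around $2\sqrt{N}$ on the scale $N^{-1/6}$ toward the Tracy--Widom law $F_2$, whose tails behave as $1-F_2(t) \sim e^{-4 t^{3/2}/3}$ and $F_2(-t) \sim e^{-t^{3}/12}$; the proposition asserts that the matching bounds already hold at the pre-limit level, with constants uniform in $N$ and $\epsilon$.

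For the upper tail (\ref{L-dev-up}) the natural approach is the moment (trace) method: for every positive integer $p$, $\lambda_{\max}(M_N)^{2p} \leq \mathrm{Tr}(M_N^{2p})$, so Markov's inequality yields
\begin{eqnarray*}
Q\bigl(L(1,N) \geq 2\sqrt{N}(1+\epsilon)\bigr) \leq \bigl(2\sqrt{N}(1+\epsilon)\bigr)^{-2p}\, Q\bigl(\mathrm{Tr}(M_N^{2p})\bigr).
\end{eqnarray*}
The moments $Q(\mathrm{Tr}(M_N^{2p}))$ are expressible via Wick's formula in terms of pair partitions indexed by genus, which gives a Harer--Zagier type estimate of the form $Q(\mathrm{Tr}(M_N^{2p})) \leq N (2\sqrt{N})^{2p} e^{Cp^{3}/N^{2}}$ valid for $p \ll N$. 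Optimizing $p \sim N\sqrt{\epsilon}$ produces the desired exponent $-cN\epsilon^{3/2}$; this is the argument presented in Section~$2.1$ of \cite{Ledoux}, and it extends routinely to all $\epsilon \geq 0$.

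For the lower tail (\ref{L-dev-down}) the additional factor of $N$ in the rate reflects the Coulomb-gas structure of the eigenvalues: squeezing them all to the left of $2\sqrt{N}(1-\epsilon)$ is a macroscopic rearrangement of the semicircle bulk, and the Vandermonde factor $\prod_{i<j}(\lambda_i-\lambda_j)^{2}$ in the joint density penalizes it at rate $O(N^{2}\epsilon^{3})$. The cleanest route to the stated finite-$N$ bound is the Hermite-kernel approach of Ledoux and Rider \cite{LR}: starting from the Fredholm-determinant identity $Q(\lambda_{\max}(M_N) \leq t) = \det\bigl(I - K_N \mathbf{1}_{(t,\infty)}\bigr)$, they obtain (\ref{L-dev-down}) by integrating sharp pointwise bounds on the correlation kernel $K_N(x,x) = \sum_{k < N} h_{k}(x)^{2}$, with $h_k$ the normalized Hermite functions.

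The main obstacle is the uniform-in-$\epsilon$ tracking of constants: the shape of the two rates $\epsilon^{3/2}$ and $\epsilon^{3}$ is a standard consequence of the Tracy--Widom asymptotics, but extending them to non-asymptotic estimates valid for all $\epsilon \geq 0$ (respectively $\epsilon \in [0,1]$) requires either carefully optimized combinatorial bounds in the upper case, or sharp finite-$N$ Hermite-function estimates up to the spectral edge in the lower case. Since the proposition is explicitly stated as a recollection of known inequalities, I would simply invoke \cite{LR} and Chapter~$2$ of \cite{Ledoux} rather than reprove these inputs.
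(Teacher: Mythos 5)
Your proposal matches the paper's treatment exactly: the paper also states this proposition as a recollection of known inequalities, reducing $L(1,N)$ to $\lambda_{\max}$ of an $N\times N$ GUE matrix via \cite{Bar,OY} and then simply citing \cite{LR}, Theorem 1, and \cite{Ledoux}, Chapter 2 (in particular Section 2.1 for the moment-method upper tail), rather than reproving the non-asymptotic bounds. Your sketch of what lies behind those citations (Harer--Zagier moment bounds with optimization $p\sim N\sqrt{\epsilon}$ for the upper tail; Ledoux--Rider Hermite-kernel estimates for the lower tail) is accurate and consistent with the paper's intent.
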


\vspace{2ex}

\noindent We first transfer these inequalities to the LPP context:

\begin{proposition}\label{non-asymptotics-discrete}
For $M=O(N^a)$ with $a<3/7$, there exists a constant $C_a>0$ such that, for all $\epsilon \geq 0$,

\begin{eqnarray}\label{non-asymptotics-discrete-up}
Q \left\lbrace  T(N, M)\geq 2\sqrt{NM}(1+\epsilon) \right\rbrace \leq C_a \exp \{ - M \epsilon^{3/2} / C_a\}.
\end{eqnarray}

\noindent and, for $a<1/5$  ($a<1/2$ if the environment is Gaussian) and $0\leq \epsilon \leq 1$,

\begin{eqnarray}\label{non-asymptotics-discrete-down}
Q \left\lbrace  T(N, M)\leq 2\sqrt{NM}(1-\epsilon) \right\rbrace \leq C_a \exp \{ - M^2 \epsilon^3 / C_a\}.
\end{eqnarray}
\end{proposition}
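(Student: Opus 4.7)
The plan is to transfer Proposition \ref{non-asymptotics-L} from the Brownian functional $L(1,M)$ to the discrete LPP $T(N,M)$ by combining two couplings with the Brownian scaling $L(N,M)\stackrel{d}{=}\sqrt{N}\,L(1,M)$. First, I would apply the KMT approximation (Theorem \ref{KMT-th}) row by row, as in Step 1 of the proof of Theorem \ref{MOregime-d}, to couple each sequence $\{\eta(k,x)\}_k$ with i.i.d.\ standard Gaussians $\{g(k,x)\}_k$; a union bound over the $M+1$ rows then gives
\begin{eqnarray*}
Q\bigl(|T(N,M)-T^g(N,M)|\geq 2M(K_1\log N+\theta)\bigr)\leq (M+1)K_2\,e^{-K_3\theta},
\end{eqnarray*}
where $T^g(N,M)$ denotes the LPP with Gaussian weights. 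Second, setting $g(k,x)=B_k^{(x)}-B_{k-1}^{(x)}$ realises every integer path as a continuous path with integer jump times, giving $T^g\leq L$; in the reverse direction, rounding the optimal continuous jump times of $L$ to integers and invoking the Brownian modulus of continuity yields $L-T^g\leq 2M\,\omega_N$, where $\omega_N=\max_i\sup_{|s-t|\leq 1}|B_s^{(i)}-B_t^{(i)}|$ satisfies $Q(\omega_N\geq r)\leq CNM\,e^{-r^2/8}$.

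For the upper tail (\ref{non-asymptotics-discrete-up}), I use $T\leq L+|T-T^g|$ to split
\begin{eqnarray*}
Q\bigl(T\geq 2\sqrt{NM}(1+\epsilon)\bigr)\leq Q\bigl(L(1,M)\geq 2\sqrt{M}(1+\epsilon/2)\bigr)+Q\bigl(|T-T^g|\geq \sqrt{NM}\,\epsilon/2\bigr).
\end{eqnarray*}
The first summand is at most $C\exp(-M\epsilon^{3/2}/C)$ by (\ref{L-dev-up}) and scaling, while choosing $\theta$ such that $2M(K_1\log N+\theta)=\sqrt{NM}\,\epsilon/2$ in the KMT estimate makes the second summand decay as $\exp(-c\epsilon\sqrt{N/M})$. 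This KMT rate dominates the Brownian rate $\exp(-M\epsilon^{3/2}/C)$ throughout the non-trivial range $\epsilon\geq CM^{-2/3}$ precisely when $a<3/7$, yielding (\ref{non-asymptotics-discrete-up}).

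For the lower tail (\ref{non-asymptotics-discrete-down}) I use $T\geq L-(L-T^g)-|T-T^g|$ and split
\begin{eqnarray*}
Q\bigl(T\leq 2\sqrt{NM}(1-\epsilon)\bigr)&\leq& Q\bigl(L(1,M)\leq 2\sqrt{M}(1-\epsilon/3)\bigr)\\
&&+\,Q\bigl(L-T^g\geq \sqrt{NM}\,\epsilon/3\bigr)+Q\bigl(|T-T^g|\geq \sqrt{NM}\,\epsilon/3\bigr).
\end{eqnarray*}
The first term supplies the main contribution $\exp(-M^2\epsilon^3/C)$ via (\ref{L-dev-down}) and scaling. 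The oscillation term is of order $\exp(-cN\epsilon^2/M)$, and in the Gaussian case, where the KMT step is absent and a finer identification of $L$ with $T^g$ (of Baryshnikov type) is available, this is the only error, pushing the range up to $a<1/2$. The KMT error in the general case decays only as $\exp(-c\epsilon\sqrt{N/M}/M)$, and forcing it to dominate $\exp(-M^2\epsilon^3/C)$ uniformly up to $\epsilon=1$ requires $M^{5/2}\leq C\sqrt{N}$, that is, $a<1/5$.

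The main obstacle is matching the very sharp cubic lower-tail rate $M^2\epsilon^3$ of $L$ against the Gaussian-type tails of the two coupling errors. The upper tail is much softer (exponent $\epsilon^{3/2}$) and costs only $a<3/7$, whereas the lower tail is what forces the $a<1/5$ restriction in the general setting (and $a<1/2$ in the Gaussian one), which in turn limits the range of validity of Theorem \ref{fluct23}.
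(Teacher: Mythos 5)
Your proposal follows the same architecture as the paper: KMT-couple each row with Gaussians to pass from $T$ to $T^g$, realise the Gaussians as unit increments of Brownian motions to pass from $T^g$ to $L$, invoke Proposition~\ref{non-asymptotics-L} for $L$ after Brownian scaling, and close with a union bound over the sources of error. The decomposition $T\leq L+|T-T^g|$ (respectively $T\geq L-|T-T^g|-(L-T^g)$) is exactly the one used in the paper, with the small cosmetic difference that the paper keeps three terms in both directions while you exploit $T^g\leq L$ to drop one in the upper tail.

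The genuine difference is in how the two coupling errors are quantified, and it matters. You control both $|T-T^g|$ and $L-T^g$ by \emph{union bounds} over rows: from the worst-case bound $|T-T^g|\leq 2M\max_x\max_k|S(k,x)-T(k,x)|$ you deduce a tail $\sim\exp(-c\,\theta/M)$ at level $\theta$, and analogously $L-T^g\leq 2M\,\omega_N$ gives $\sim\exp(-c\,\theta^2/M^2)$. The paper instead proves (in its Lemma and Corollary~\ref{cor-markov}) \emph{moment} bounds $Q\{e^{|T-T^g|}\}\leq e^{C_2M\log N}$ and $Q\{e^{M^{-1}|L-T^g|^2}\}\leq e^{C_3M\log N}$, obtained by factorising the error over the $M$ independent rows, and then applies Markov's inequality. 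At threshold $\theta$ these read $\exp(-\theta+C_2M\log N)$ and $\exp(-M^{-1}\theta^2+C_3M\log N)$ — a full factor of $M$ (respectively $M$ in the squared exponent) stronger than your union-bound estimates, precisely because independence across rows is exploited rather than discarded. Concretely, at $\theta=\epsilon\sqrt{NM}$, your KMT error decays like $\exp(-c\,\epsilon\sqrt{N/M})$ while the paper's decays like $\exp(-c\,\epsilon\sqrt{NM})$, and your oscillation error decays like $\exp(-c\,\epsilon^2N/M)$ while the paper's decays like $\exp(-c\,\epsilon^2N)$.

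This quantitative gap shows up in two of your claims. For the Gaussian lower tail you write that the oscillation rate $\exp(-cN\epsilon^2/M)$ pushes the range to $a<1/2$; tracking it through, $\epsilon^2N/M\gtrsim M^2\epsilon^3$ for all $\epsilon\leq 1$ only forces $a<1/3$, not $a<1/2$. It is the paper's independence-based rate $\exp(-c\,\epsilon^2N)$, giving the constraint $\epsilon^2N\gtrsim M^2\epsilon^3$, that yields $a<1/2$; an appeal to ``a finer identification of Baryshnikov type'' does not by itself produce the improved rate. (Also note your written ``$\exp(-c\,\epsilon\sqrt{N/M}/M)$'' for the KMT error has a spurious extra $1/M$; the $a<1/5$ you then deduce from $\sqrt{N/M}\gtrsim M^2$ is consistent with the corrected rate $\exp(-c\,\epsilon\sqrt{N/M})$.) So the structure of your argument is right and the upper tail goes through, but to reach the stated ranges in the lower tail you need the factorised moment estimates, not row-by-row worst-case bounds.
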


\begin{remark}
 The result for Gaussian variables can be found in \cite{I}. The exponential case is covered in the Thesis of the same author. We present here a rather complete proof of the exponential case using KMT, both for completness and to state some inequalities that will be used in the following. 
\end{remark}

The core of the proof of Proposition \ref{non-asymptotics-discrete} consists in the following Lemma, whose proof is a simple application of the KMT coupling (Theorem \ref{KMT-th}):

\begin{lemma} There exist positive constants $C_2,\, C_3$ such that,

 \begin{eqnarray*}
  Q \left\lbrace \exp \{ |T(N, M) - T^g(N,M)| \} \right\rbrace
  \leq e^{C_2 M \log N},
 \end{eqnarray*}

 \begin{eqnarray*}
  Q \left\lbrace \exp \{ M^{-1}|L(N, M) - T^g(N,M)|^2 \} \right\rbrace
  \leq e^{C_3 M \log N},
 \end{eqnarray*}

\noindent where $T^g(N,M)$ is the discrete Gaussian last passage functional given by the KMT approximation. 
\end{lemma}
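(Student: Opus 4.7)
The plan is to prove both inequalities by the same scheme: bound the relevant discrepancy path-by-path by a sum over rows of iid row errors $Y_x$, $x=0,\ldots,M$; use independence to factorize the exponential moment as $(Qe^{\alpha Y_x})^{M+1}$; and check that the single-row factor is $e^{O(\log N)}$, so that the product is $e^{O(M\log N)}$.

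For the first inequality, I apply the KMT coupling (Theorem~\ref{KMT-th}) row-by-row, producing $\Delta_x := \max_{k\le N}|S(k,x)-T(k,x)|$ iid with $Q(\Delta_x>K_1\log N+\theta)\le K_2 e^{-K_3\theta}$, where $K_3$ can be made as large as desired by enlarging $K_1$. Writing the Hamiltonian of a discrete path $\nS\in\Omega_{N,N^a}$ along its horizontal segments (one per row, since the successive heights $L_0,\ldots,L_M$ are all distinct), I obtain
\begin{equation*}
 |H(\nS)-H^g(\nS)|\le 2\sum_{i=0}^{M}\Delta_{L_i}\le 2\sum_{x=0}^{M}\Delta_x,
\end{equation*}
uniformly in $\nS$. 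Maxing over $\nS$ yields $|T(N,M)-T^g(N,M)|\le 2\sum_x\Delta_x$. Tail integration gives $Qe^{2\Delta_x}\le CN^{2K_1}$ provided $K_3>2$, and independence then yields $Qe^{|T-T^g|}\le(CN^{2K_1})^{M+1}\le e^{C_2 M\log N}$.

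For the second inequality I replay the scheme with the Brownian oscillation $V_x$ of $B^{(x)}$ at scale one on $[0,N+1]$, as in Step~2 of the proof of Theorem~\ref{MOregime-d}. That step already gives $|\nBr(\ns)-H^g(\ns_{\mathrm{disc}})|\le 2\sum_x V_x$ for any continuous path $\ns$ and its round-down $\ns_{\mathrm{disc}}$; since any discrete path also embeds in $\Omega^c_{N,N^a}$ with $\nBr=H^g$ (integer jump times), this yields $0\le L(N,M)-T^g(N,M)\le 2\sum_x V_x$, the $V_x$ being iid with $Q(V_x>u)\le CN e^{-cu^2}$ (union bound over $N$ unit intervals plus reflection). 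To accommodate the square and the $M^{-1}$, I use the Gaussian representation
\begin{equation*}
 e^{y^2/M}=\frac{1}{\sqrt{2\pi}}\int_{\re}\exp\!\left(yz\sqrt{2/M}-z^2/2\right)dz,
\end{equation*}
with $y=2\sum_x V_x$, swap expectation and integral by Fubini, and reduce to evaluating the product of single-row MGFs $(Qe^{z\sqrt{8/M}V_x})^{M+1}$. Splitting the tail integral for $Qe^{\mu V_x}$ at $u\sim\sqrt{\log N}$ yields $\log Qe^{\mu V_x}=\mu\cdot O(\sqrt{\log N})+\mu^2\cdot O(1)+O(\log N)$; raising to the power $M+1$ and integrating against $e^{-z^2/2}$ produces a Gaussian integral in $z$ whose exponent evaluates to $O(M\log N)$, giving the claim.

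The main obstacle is the second inequality. A direct Cauchy--Schwarz $(\sum V_x)^2\le(M+1)\sum V_x^2$ would place a coefficient in front of $V_x^2$ beyond the radius of convergence of $Qe^{\lambda V_x^2}$ (finite only for $\lambda<1/2$), so the factorization $(Qe^{\lambda V_x^2})^{M+1}$ is not available. The Gaussian-representation trick sidesteps this by keeping the iid-sum structure intact; the delicate check is that after integrating out $z$ the quadratic coefficient in $z$ remains strictly negative, which rests on the fact that $V_x$ concentrates around its mean $EV_x\sim\sqrt{\log N}$ at the $O(1)$ scale of a single Brownian increment (classical concentration of the maximum of $N$ near-independent sub-Gaussians), so that only the linear-in-$z$ contribution from the mean produces the $M\log N$ at leading order.
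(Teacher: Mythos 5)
Your first inequality is correct and follows the same route as the paper: row-by-row KMT coupling, uniform path bound $|T-T^g|\le 2\sum_x\Delta_x$, factorize by independence, and take $K_3>2$.

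For the second inequality, you correctly diagnose the defect in the naive route (which is essentially what the paper's terse display does): Cauchy--Schwarz forces the factor $Q e^{8 V_x^2}$, which is \emph{infinite}, since the tail $Q(V_x>u)\le CNe^{-u^2/C_4}$ has $C_4\ge 24$ (one already loses a factor $24$ from the window length and the step $Q(\sup_{[0,3]}|B|>u/2)\le 4Q(B_3>u/2)$), so $\lambda=8$ is far outside the radius of convergence $\lambda<1/C_4$. Unfortunately your Gaussian-representation fix fails at exactly the same quantitative point. Carrying it out: with $\mu=2z\sqrt{2/M}$ one has
\begin{equation*}
(Qe^{\mu V_x})^{M+1}\le\exp\bigl\{(M+1)\mu\, QV_x+(M+1)\mu^2\sigma^2/2\bigr\}, \qquad \sigma^2=\sup_{|s-t|<2}\operatorname{Var}(B_s-B_t)=2,
\end{equation*}
by Borell--TIS, and $(M+1)\mu^2\sigma^2/2\approx 8z^2$. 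After multiplying by $e^{-z^2/2}$ the quadratic coefficient is $8-\tfrac12=7.5>0$, so the $z$-integral diverges. Your claim that ``$V_x$ concentrates at the $O(1)$ scale'' is true for the variance of $V_x$ (which is indeed $O(1/\log N)$), but the exponential moment is governed by the Borell--TIS/Tsirelson variance proxy $\sigma^2=2$, which is not small; the Gumbel-type upper tail of the sup does not help you here.

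In other words, the lemma with the bare prefactor $M^{-1}$ in the exponent does not seem provable by either your route or the paper's displayed one, and it is most naturally read as having an implicit small numerical constant, i.e.
\begin{equation*}
Q\exp\bigl\{c\,M^{-1}|L(N,M)-T^g(N,M)|^2\bigr\}\le e^{C_3 M\log N}
\end{equation*}
for some explicit $c>0$. With such a $c$, your Gaussian-representation scheme does close (one needs $c<1/16$ with $\sigma^2=2$), and in fact it tolerates a larger constant than the Cauchy--Schwarz route (which needs $c<1/(8C_4)$). Either version suffices downstream: Corollary~\ref{cor-markov} then gives $Q\{|L-T^g|\ge\theta_N\}\le e^{-cM^{-1}\theta_N^2+C_3 M\log N}$, and with $\theta_N=\tfrac{\epsilon}{2}\sqrt{NM}$, $M=O(N^a)$, $a<3/7$, the $-c\epsilon^2 N/4$ term still dominates, so Proposition~\ref{non-asymptotics-discrete} and Theorem~\ref{small-dev} go through unchanged up to constants. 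You should make the constant explicit and state the lemma in this corrected form; as written, your final ``delicate check'' fails.
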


\begin{proof}

By the KMT approximation,

\begin{eqnarray*}
 Q \left\lbrace e^{\{ |T(N, M) - T^g(N,M)| \} } \right\rbrace &\leq&
 Q\left\lbrace \exp \{ 2 \sum^M_{i=0}  \sup_{j \leq N}|\sum^{j}_{k=0} \eta(k,i)-B^{(i)}_j|\} \right\rbrace \\
 &\leq& \left( 
 Q\left\lbrace \exp \{ 2 \sup_{j \leq N}|\sum^{j}_{k=0} \eta(k,1)-B^{(1)}_j|\} \right\rbrace 
 \right)^M \\
 &\leq& e^{C_ 1 M \log N}.
\end{eqnarray*}

For the second affirmation, remember that in the second step of the proof of Theorem \ref{MOregime}, we noticed that

\begin{eqnarray}\label{reflexion}
 Q\left\lbrace \sup_{{}^{0\leq s,t \leq N+1}_{|s-t|<2}} |B_s - B_t| > x \right\rbrace \leq C_4 e^{-x^2/C_4},
\end{eqnarray}

\noindent for some $C_4>0$. Then,

 \begin{eqnarray*}
  Q \left\lbrace \exp \{ M^{-1}|L(N, M) - T^g(N,M)|^2 \} \right\rbrace
  &\leq& \left( \exp \{ 2 (\sup_{{}^{1\leq s,t \leq N}_{|s-t|<2}}|B^{(1)}_s - B^{(1)}_t|)^2\}
  \right)^M \\
&\leq& e^{C_3 M \log N}
 \end{eqnarray*}

\noindent where we use (\ref{reflexion}) in the last step.
\end{proof}

The following Corollary is now straightforward:

\begin{cor}\label{cor-markov}
 For any sequence $(\theta_N)_N$ with $\theta_N >0$, we have
 
\begin{eqnarray}\label{ttg-markov}
 Q\left\lbrace |T(N,M) - T^g(N,M)| \geq \theta_N \right\rbrace \leq e^{-\theta_N + C_2 M\log N},
\end{eqnarray}

\begin{eqnarray}\label{ltg-markov}
 Q\left\lbrace |L(N,M) - T^g(N,M)| \geq \theta_N \right\rbrace \leq e^{-M^{-1}\theta^2_N + C_3 M\log N}.
\end{eqnarray}

\end{cor}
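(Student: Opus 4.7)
The plan is to derive both inequalities as immediate applications of the exponential Chebyshev (Markov) inequality to the two bounds provided by the preceding Lemma. Since the Lemma is already stated in the excerpt, the corollary is essentially a one-line consequence, so the proof plan is short.

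First I would establish the inequality (\ref{ttg-markov}). I would write
\begin{eqnarray*}
 Q\{|T(N,M) - T^g(N,M)| \geq \theta_N\}
 = Q\{e^{|T(N,M) - T^g(N,M)|} \geq e^{\theta_N}\},
\end{eqnarray*}
and then apply Markov's inequality together with the first bound of the Lemma to get
$e^{-\theta_N}\cdot e^{C_2 M\log N}$. No extra ingredient is needed; the exponential moment was precisely tailored to this purpose.

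Second, for the inequality (\ref{ltg-markov}) involving $L$ instead of $T$, I would proceed identically but use the squared deviation. That is, I would rewrite the event $\{|L(N,M) - T^g(N,M)| \geq \theta_N\}$ as $\{M^{-1}|L(N,M) - T^g(N,M)|^2 \geq M^{-1}\theta_N^2\}$ and then invoke Markov on $\exp\{M^{-1}|L(N,M) - T^g(N,M)|^2\}$ using the second bound of the Lemma, obtaining the factor $e^{-M^{-1}\theta_N^2}\cdot e^{C_3 M\log N}$.

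There is no genuine obstacle here: both steps are purely mechanical, and the only subtlety already lies in the Lemma itself (the exponent $|\cdot|$ in the first bound and the squared exponent $M^{-1}|\cdot|^2$ in the second, which reflect the sub-exponential nature of the KMT error for $T-T^g$ and the sub-Gaussian nature of the Brownian modulus-of-continuity error for $L-T^g$ respectively). The shape of the resulting tail bounds, linear in $\theta_N$ versus quadratic in $\theta_N/\sqrt M$, is therefore an automatic consequence of which exponential moment is finite.
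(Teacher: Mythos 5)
Your proof is correct and is exactly the argument the paper has in mind: the Corollary is stated as a straightforward consequence of the preceding Lemma, and the intended route is the exponential Markov (Chebyshev) inequality applied respectively to $e^{|T-T^g|}$ and $e^{M^{-1}|L-T^g|^2}$, giving the two tail bounds verbatim.
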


\textsc{Proof of Proposition \ref{non-asymptotics-discrete}}

 Let us prove (\ref{non-asymptotics-discrete-up}) ((\ref{non-asymptotics-discrete-down}) is proved
following the same lines). Remember $T^g(N,M)$ denotes the last passage percolation functional for the Gaussian environment given by the KMT coupling. Then,

 \begin{eqnarray*}
  &\ &Q \left\lbrace  T(N, M)\geq 2\sqrt{NM}(1-\epsilon) \right\rbrace 
  \leq Q \left\lbrace  L(N, M)\geq 2\sqrt{NM}(1-\epsilon/2) \right\rbrace \\
  &\ & \quad \quad \quad \quad \quad \quad+ \quad Q \left\lbrace  |T(N,M)-T^g(N, M)|\geq \frac{\epsilon}{2} \sqrt{NM} \right\rbrace \\
  &\ & \quad \quad \quad \quad \quad \quad+ \quad Q \left\lbrace  |L(N,M)-T^g(N, M)|\geq \frac{\epsilon}{2} \sqrt{NM} \right\rbrace \\
 \end{eqnarray*}
 
\noindent The first term can be treated using (\ref{L-dev-up}) and Brownian scaling: 

\begin{eqnarray*}
  Q \left\lbrace  L(N, M)\geq 2\sqrt{NM}(1-\epsilon/2) \right\rbrace \leq C_1 e^{- \frac{N^a}{C_1} \epsilon^{3/2}}.
\end{eqnarray*}

\noindent The remaining term can be treated with Corollary \ref{cor-markov} taking $\theta_N = \epsilon/2 \sqrt{NM}$.

\noindent The carefull analysis performed in \cite{I}, Section $5$, allows us to choose the uniform constant in (\ref{non-asymptotics-discrete-up}).
\hfill $\square$

\vspace{2ex}


\noindent We turn now to the proof of Theorem \ref{small-dev}.

\noindent  \textsc{Proof of the inequality \ref{dev-down}:} This follows by lowering the
partition function by one term: recall that $u^* \sim \beta^2/\gamma N^{a-1}$, and define
$n^* = N/(1+u^*)$. Then,

\begin{eqnarray*}
 &\ &Q \left\lbrace \log Z^{(h_N)}_{\beta,N} \leq \frac{\beta^2}{\gamma} N^{(1+a)/2}(1-\epsilon) \right\rbrace \\
 &\leq& Q \left\lbrace \beta T(n^*, N-n^*)-h_N \times (N-n^*)\leq \frac{\beta^2}{\gamma} N^{(1+a)/2}(1-\epsilon) \right\rbrace,
\end{eqnarray*}

\noindent Observe that

$$ h_N \times (N-n^*) = \frac{\beta^2}{\gamma} N^{(1+a)/2}. $$

\noindent We are then reduced to estimate the quantity

\begin{eqnarray}\label{dev-down-bound}
Q \left\lbrace  T(n^*, N-n^*)\leq \frac{2\beta}{\gamma} N^{(1+a)/2}(1-\epsilon/2) \right\rbrace.
\end{eqnarray}

\noindent which can be handled with (\ref{non-asymptotics-discrete-down}). \qed

\vspace{2ex}

\noindent  \textsc{Proof of the inequality \ref{dev-up}:} This proof is more involved as it requires to control all the terms 
in the sum defining $Z^{(h_N)}_{\beta,N}$. Again, we need to give a special treatment to the terms for which $N-n$ is not of the relevant order (namely $O(N^a)$).
We use the convenient parametrization $N-n= vN^a$ for some $v\geq 0$, i.e., $u = v N^{a-1}$. To lighten notation, let us denote 

\begin{eqnarray*}
 q(\epsilon,v) = Q\left\lbrace  \beta T(N, vN^a) - \gamma v N^{(1+a)/2}\geq \frac{\beta^2}{\gamma} N^{(1+a)/2}(1+\epsilon) \right\rbrace.
\end{eqnarray*}

Several cases have to be analyzed separately:

\vspace{2ex}

\noindent \textsc{Case $v\leq \beta^2/(2\gamma)^2$:} We use the fact that, for these values of $v$, $T(N, vN^a)$ is stochastically dominated
by $T(N, \beta^2/(2\gamma)^2 N^a)$. Then, neglecting the term $\gamma v$,

\begin{eqnarray*}
 q(v,\epsilon) &\leq& Q\left\lbrace  T(N, \beta^2/(2\gamma)^2 N^a) \geq \frac{\beta}{\gamma} N^{(1+a)/2}(1+\epsilon) \right\rbrace\\
  &\leq& C_a \exp \left\lbrace - \frac{\beta^2 N^a}{4 \gamma^2 C_a} \epsilon^{3/2} \right\rbrace. 
\end{eqnarray*} 

\vspace{2ex}

\noindent \textsc{Case $\beta^2/(2\gamma)^2 \leq v \leq 16\beta^2/\gamma^2$:} We make use of the fact that
$2\beta \sqrt{v} - \gamma v \leq \beta^2/\gamma$ for all $v \geq 0$ and that, for these values of $v$,
we have $1/\sqrt{v} \geq \gamma/(16 \beta)$. Then,

\begin{eqnarray*}
 q(\epsilon, v) &\leq& Q\left\lbrace  \beta T(N, vN^a)\geq 2\beta \sqrt{v}N^{(1+a)/2}+\frac{\beta^2 \epsilon}{\gamma} N^{(1+a)/2} \right\rbrace \\
 &\leq& Q\left\lbrace  \beta T(N, vN^a)\geq 2\beta \sqrt{v}N^{(1+a)/2} \left( 1 + \frac{\beta \epsilon}{2\gamma \sqrt{v}} \right) \right\rbrace \\
 &\leq& Q\left\lbrace  \beta T(N, vN^a)\geq 2\beta \sqrt{v}N^{(1+a)/2} \left( 1 + \frac{\epsilon}{32} \right) \right\rbrace \\
&\leq& C_a \exp \left\lbrace - \sqrt{v} \frac{N^a}{\!\!\! C_a} \epsilon^{3/2} \right\rbrace \\
&\leq& C_a \exp \left\lbrace - \frac{\beta^2 N^a}{\!\!\! \gamma^2 C'_a} \epsilon^{3/2} \right\rbrace,
\end{eqnarray*}

\noindent thanks to the lower bound we assumed on $v$. So far, we made no additional assumption on $\epsilon$.

\vspace{2ex}

\noindent \textsc{Case $16\beta^2 / \gamma^2 \leq v$:}

\begin{eqnarray}\nonumber
 q(\epsilon, v) &\leq& Q\left\lbrace  \beta L(N, vN^a) - \frac{\gamma v}{2} N^{(1+a)/2}\geq \frac{\beta^2}{2\gamma} N^{(1+a)/2}(1+\epsilon) \right \rbrace \\
\label{ttg-term}
&\ & \quad + Q\left\lbrace \beta |T(N,M) - T^g(N,M)| - \frac{\gamma v}{4} N^{(1+a)/2}\geq \frac{\beta^2}{4\gamma} N^{(1+a)/2}(1+\epsilon) \right\rbrace \\
\label{ltg-term}
&\ & \quad + Q\left\lbrace \beta |L(N,M) - T^g(N,M)| - \frac{\gamma v}{4} N^{(1+a)/2}\geq \frac{\beta^2}{4\gamma} N^{(1+a)/2}(1+\epsilon) \right\rbrace
\end{eqnarray}

\noindent Let us treat the first summand: assume that $K\beta^2/\gamma^2 < v \leq (K+1)\beta^2/\gamma$, for some $K\geq 16$.
Then,

\begin{eqnarray*}
 \frac{\beta^2}{2\gamma}(1+\epsilon) + K \frac{\beta^2}{2\gamma} \geq \frac{2\beta^2}{\gamma} \sqrt{K+1}
 \left(1 + \frac{\epsilon}{4\sqrt{K+1}}\right).
\end{eqnarray*}

\noindent So, recalling that $L(N, (K+1) \beta^2 N^a/ \gamma^2)$ stochastically dominates $L(N,vN^a)$ for these values of $v$,

\begin{eqnarray*}
 &\ &Q\left\lbrace \beta L(N,vN^a) \geq \left( \frac{\beta^2}{2\gamma}(1+\epsilon) + K \frac{\beta^2}{2\gamma}\right) N^{(1+a)/2} \right\rbrace \\
&\leq& Q\left\lbrace L \left( N, (K+1) \frac{2\beta}{\gamma^2} N^a\right) \geq
\frac{2\beta}{\gamma} \sqrt{K+1} N^{(1+a)/2} \left( 1 + \frac{\epsilon}{4 \sqrt{K+1}}\right) 
\right\rbrace \\
&\leq& C_1 \exp \left\lbrace -\frac{1}{C_1} (K+1) \frac{\beta^2}{\gamma^2}N^a \left( \frac{\epsilon}{4\sqrt{K+1}}\right)^{3/2}\right\rbrace \\
&\leq& C_5 \exp \left\lbrace - \frac{1}{C_5} N^a \epsilon^{3/2}\right\rbrace.
\end{eqnarray*}

\noindent The remaining terms (\ref{ttg-term}) and (\ref{ltg-term}) can be handle with (\ref{ttg-markov}) and (\ref{ltg-markov}) respectively. We then get

\begin{eqnarray*}
 &\ & q(\epsilon, v) \leq C_5 \exp \left\lbrace - \frac{1}{C_5} N^a \epsilon^{3/2}\right\rbrace 
  +  \exp \left\lbrace - c_1 v N^{(1+a)/2} - c_2 N^{(1+a)/2}(1+ \epsilon) \right\rbrace \\
  &\ & \quad \quad \quad +  \exp \left\lbrace - c_3 v N - c_4 v^{-1}(1+\epsilon)^2 \right\rbrace \\
 &\ & \quad \leq  C_5 \exp \left\lbrace - \frac{1}{C_5} N^a \epsilon^{3/2}\right\rbrace 
  +  \exp \left\lbrace - c'_2 \epsilon N^{(1+a)/2} \right\rbrace \\
 &\ & \quad \quad \quad +  \exp \left\lbrace - c_3 v N - c'_4 \frac{\epsilon^2}{v} N \right\rbrace \\
 &\ & \quad \leq C_5 \exp \left\lbrace - \frac{1}{C_5} N^a \epsilon^{3/2}\right\rbrace 
 + c_5 \exp \left\lbrace - \frac{1}{c_5} \epsilon N^{(1+a)/2}\right\rbrace.
\end{eqnarray*}

\noindent Observe that, for $0\leq \epsilon \leq N^{1-a}$, there is a constant $C_6> 0$ such that

\begin{eqnarray*}
 q(\epsilon, v) \leq C_6 \exp \left\lbrace - \frac{1}{C_6} N^a \epsilon^{3/2}\right\rbrace,
\end{eqnarray*}

\noindent which ends the proof.

\hfill $\square$

Let us observe that, for $\epsilon > N^{1-a}$, there is a constant $C_7>0$ such that

\begin{eqnarray}\label{large-epsilon}
 q(\epsilon, v) \leq C_7  \exp \left\lbrace - \frac{1}{C_7} \epsilon N^{(1+a)/2}\right\rbrace.
\end{eqnarray}

\vspace{2ex}


\subsection{Fluctuation bounds}\label{sdfb}

We can now complete the proof of Theorem \ref{fluct23}. The argument to deduce the fluctuation bounds from our moderate deviations is very general and can be found in \cite{Ledoux} in the context of random matrices.
To lighten notations, let us denote

\begin{eqnarray*}
 X_N = \log Z^{(h_N)}_{\beta, N}, \quad \quad x_N = \frac{\beta^2}{\gamma}N^{(1+a)/2}.
\end{eqnarray*}
 The upper bound follows from the previous deviation inequalities by a direct computation:

\begin{eqnarray*}
 &\ & Q(X_N - x_N)^2 = \int^{+\infty}_0  Q\left\lbrace (X_N-x_N)^2 \geq t \right\rbrace dt\\
 &\ & \quad \quad \quad \leq \int^{+\infty}_0  Q\left\lbrace X_N-x_N \geq \sqrt{t} \right\rbrace dt +
 \int^{x_N}_0 Q\left\lbrace X_N-x_N \leq -\sqrt{t} \right\rbrace dt\\
 &\ & \quad \quad \quad = 2 N^{1+a} \int^{+\infty}_0  u\, Q\left\lbrace X_N \geq \frac{\beta^2}{\gamma} (1+u) \right\rbrace du \\
 &\ & \quad \quad \quad \quad + \quad
 2N^{1+a}\int^{\beta^2/\gamma}_0 u\, Q\left\lbrace X_N \leq \frac{\beta^2}{\gamma} (1-u) \right\rbrace du
\end{eqnarray*}

\noindent Let us bound the first integral. The second one can be treated in the same way. We apply (\ref{dev-up}) from
Theorem \ref{small-dev} and split the interval of integration:

\begin{eqnarray}\nonumber
 &\ &\int^{+\infty}_0  uQ\left\lbrace X_N \geq \frac{\beta^2}{\gamma} (1+u) \right\rbrace du \\
 \label{int-up}
 &\ & \quad \quad \quad
 = C \int^{1}_0 u e^{- \frac{1}{C} N^a u^{3/2}} du \, + \, C_7 \int^{+\infty}_1 u e^{- \frac{1}{C_7}N^{\frac{(1+a)}{2}} u} du.
\end{eqnarray}

\noindent The second integral in this last display is easily seen to decrease as $\exp \{- N^{(1+a)/2}\}$. For the first integral,
observe that the integrand is $O(N^{-2a/3})$ in $[0,N^{-2a/3}]$ and decreases exponentially fast outside this interval. 
Then,

\begin{eqnarray*}
 \int^1_0 u e^{- \frac{1}{C} N^a u^{3/2}} du \leq C' N^{-4a/3},
\end{eqnarray*}

\noindent for some $C'>0$. Putting this back into (\ref{int-up}), we found

\begin{eqnarray*}
 \int^{+\infty}_0  uQ\left\lbrace X_N \geq \frac{\beta^2}{\gamma} (1+u) \right\rbrace du \leq C e^{N^{1-a/3}}.
\end{eqnarray*}

\noindent As we already mentioned, the deviations on the left of the mean can be treated similarly.
This gives the upper bound.
For the lower bound, observe that

$$ \beta T(n^*, N-n^*)-h_N \times (N-n^*) \sim \beta T(N, \frac{\beta^2}{\gamma^2}N^a) - \frac{2\beta^2}{\gamma}N^{(1+a)/2}.$$

\noindent Then, applying Jensen's inequality,

\begin{eqnarray*}
 &\ &Q \left\lbrace \left( \log Z^{(h_N)}_{\beta, N} - \frac{\beta^2}{\gamma}N^{(1+a)/2}\right)^2 \right\rbrace
 \geq \left( Q \left\lbrace \log Z^{(h_N)}_{\beta, N} - \frac{\beta^2}{\gamma}N^{(1+a)/2} \right\rbrace \right)^2\\
 &\ & \quad \quad \quad \quad \quad \quad \geq
 \left( Q \left\lbrace  \beta T(N, \beta^2/\gamma^2 N^a) - 2\beta^2/\gamma N^{(1+a)/2} 
      \right\rbrace \right)^2 \\
\end{eqnarray*}

\vspace{4ex}
\noindent Now, recall \cite{BM} that

$$ \frac{T(N, \beta^2/\gamma^2 N^a) - 2\beta^2/\gamma N^{(1+a)/2}}{N^{(\frac{1}{2}-\frac{a}{6})}}$$

\noindent converges in law to a Tracy-Widom. Then, recalling that the Tracy-Widom law has a strictly positive expected value,

$$\left( Q \left\lbrace  \beta T(N, \beta^2/\gamma^2 N^a) - 2\beta^2/\gamma N^{(1+a)/2}  \right\rbrace \right)^2 \geq cN^{1 - a/3},$$

\noindent for some $c> 0$. This ends the proof.
\hfill $\square$

\begin{remark}\label{rem-KMT}
 Again, the condition $a<1/5$ seems to be a technical limitation due to our use of the KMT approximation. For a more
extensive discussion on asymptotics and non-asymptotics small deviations for asymmetric last-passage percolation, see \cite{I}.
\end{remark}

\begin{remark}\label{remark-TW}
 The limit law of the properly centered and rescaled partition function  should be the GUE Tracy-Widom law from random matrix theory. 
A proof of this fact would need to refine the analysis performed in the first section of this chapter to reduce the relevant 
values of $u$'s to an interval $[cN^{1-a}-\epsilon_N,cN^{1-a}-\epsilon_N]$ with $c=\beta^2/\gamma^2$ and $\epsilon_N \to 0$ fast enough. 
This can be done without much effort, but, in order to identify the limit law as the Tracy-Widom, we also need
a joint control of expressions of the form $L(N-cN^a - s N^{2a/3}, cN^a + s N^{2a/3})$ for $s$ ranging over a large interval.
The result we are searching for can be expressed as follows: for $s\in \re$

$$ N^{1/6}\left\lbrace L(N-sN^{2/3},N+sN^{2/3}) - 2N \right\rbrace \to {\rm Ai}(s)-s^2$$

\noindent where ${\rm Ai}(\cdot)$ is a continuous version the Airy process. This is a stationary process which marginals
are the Tracy-Widom law. See \cite{J-PNG} for a related result and a precise
description of the Airy process.
\end{remark}

\end{document}